\documentclass{amsart}
\usepackage{graphicx}
\usepackage{amsmath}%in Winedt
\usepackage{amssymb}
\usepackage{amsthm}
\newtheorem{thm}{Theorem}[section]
\newtheorem{lem}{Lemma}[section]

\newtheorem{conj}{Conjecture}[section]

\title{On the Crossing Number of Some Complete Multipartite Graphs}
\author{Pak Tung Ho}
\begin{document}

\begin{abstract}
In this paper, we find the crossing number of the complete
multipartite graphs $K_{1,1,1,1,n}$, $K_{1,2,2,n}$, $K_{1,1,1,2,n}$
and $K_{1,4,n}$. Our proof depends on Kleitman's results for the
complete bipartite graphs [D. J. Kleitman, {\it The crossing number
of $K_{5,n}$,} J. Combinatorial Theory 9 (1970) 315-323].
\end{abstract}

\maketitle

\section{Introduction}
In this paper, we consider $G$ as a 1-complex, that is the union of
vertices and edges and, if two edges $e$ and $e'$ intersect, $e$
intersects with $e'$ in one of the endpoints. An immersion $\phi$ of
$G$ into the 2-dimensional Euclidean space $\mathbb{R}^2$ is said to
be \textit{good}, if the following conditions are satisfied:
\begin{itemize}
    \item[(i)] $\phi |\phi^{-1}(\phi(V))$ and $\phi |e$ are one
    to one, where $e$ is an edge.
    \item[(ii)] For any point $p$ in $\mathbb{R}^2$,
    $\phi^{-1}(p)$ consists of at most two points.
    \item[(iii)] $\phi(e_i)\cap\phi(e_j)$ consists of at most one
    point for distinct edges $e_i$ and $e_j$.
\end{itemize}
Note that
$\phi(\overset{\circ}e_i)\cap\phi(\overset{\circ}e_j)=\varnothing$
for adjacent edges $e_i$ and $e_j$, where $\overset{\circ}e$ is the
interior of $e\in E$. We will call the image $\phi(G)$ of a good
immersion $\phi$ a \textit{drawing} of $G$.

Let $A$ and $B$ be subsets of $E$. Then the cardinality of
$\{\phi(\overset{\circ}a)\cap\phi(\overset{\circ}b)|\mbox{ }a\in
A,\mbox{ }b\in B\}$ is denoted by $cr_\phi(A,B)$. Especially,
$cr_\phi(A,A)$ will be denoted by $cr_\phi(A)$. We call
$cr_\phi(E)$ the crossing of $\phi$. The \textit{crossing number}
$cr(G)$ of a graph $G$ is the minimum crossing number among all
good immersions.

Let $A$ be a nonempty subset of $V$ or of $E$, for a graph $G$. Then
$\langle A\rangle$ denotes the subgraph of $G$ induced by $A$. The
set of edges which are incident with a vertex $v$ is denoted by
$E(v)$. For a complete $k$-partite graph $K_{a_1,a_2,...,a_k}$ with
the partition $(A_1,A_2,....,A_k)$ and the edge set $E$, where
$|A_i|=a_i$, we will write $E_{A_iA_j}$ for the edge sets of
$\langle A_i\cup A_j\rangle$.

We note the following formulas, which can be shown easily.
\begin{eqnarray}
\label{3} cr_\phi(A\cup B)&=&cr_\phi(A)+cr_\phi(B)+cr_\phi(A,B)\\
\label{4} cr_\phi(A,B\cup C)&=&cr_\phi(A,B)+cr_\phi(A,C)
\end{eqnarray}
where $A$, $B$ and $C$ are mutually disjoint subsets of $E$.

A good, updated survey on crossing numbers is \cite{Szekely}. A
longstanding problem in the theory of crossing number is
Zarankiewicz's conjecture, which asserts that the crossing number of
the complete graphs $K_{m,n}$ is given by
\begin{eqnarray}\label{21}
cr(K_{m,n})=\lfloor\frac{m}{2}\rfloor\lfloor\frac{m-1}{2}
\rfloor\lfloor\frac{n}{2}\rfloor\lfloor\frac{n-1}{2}\rfloor.
\end{eqnarray}
It is only known to be true for $m\leq 6$ \cite{Kleitman}; and for
$m=7$ and $n\leq 10$ \cite{Woodall}. Recently, in \cite{Richter}, E.
deKlerk et al. give a new lower bound for the crossing number of
$K_{m,n}$ and $K_n$. In the following, $Z(m,n)$ will denote the
right member of (\ref{21}).

It is natural to ask generalize the Zarankiewicz's conjecture and
ask: What is the crossing number for the complete multipartite
graph? In this paper, using Kleitman's result, we will determine the
crossing number of the multipartite graphs $K_{1,1,1,1,n}$,
$K_{1,2,2,n}$, $K_{1,1,1,2,n}$ and $K_{1,4,n}$ as follows:
$cr(K_{1,1,1,1,n})=Z(4,n)+n$; $
cr(K_{1,2,2,n})=Z(5,n)+\displaystyle\lfloor\frac{3n}{2}\rfloor;$
$cr(K_{1,1,1,2,n})=Z(5,n)+2n;$
$cr(K_{1,4,n})=Z(5,n)+2\displaystyle\lfloor\frac{n}{2}\rfloor$.

The arguments used to obtain the crossing numbers of the graphs
except $K_{1,4,n}$ are borrowed heavily from Asano \cite{Asano}, who
determines the crossing number of $K_{1,3,n}$ and $K_{2,3,n}$ as
follows:$cr(K_{1,3,n})=Z(4,n)+\displaystyle\lfloor\frac{n}{2}\rfloor$;
$ cr(K_{2,3,n})=Z(5,n)+n.$ In addition to Asano's paper, our proof
is based on the results of Harborth in \cite{Harborth}, who have
presented all the non-isomorphic drawings of $K_{2,2}$, $K_{2,3}$
and $K_{3,3}$. To obtain the crossing numbers of $K_{1,4,n}$, we use
the basic counting argument. On the other hand, the crossing number
of $K_{1,3,n}$ can be determined by this counting argument.

\section{Preliminary Results}
In this section, we provide some results which can be used to obtain
the required crossing numbers. Lemma \ref{24} and \ref{35} will be
used to obtain the crossing number of $K_{1,2,2,n}$.

\begin{lem}\label{24}
For any good immersion $\phi$ of $K_{1,2,2}$, there is at most 1
region whose boundary contains at least 4 vertices in $V$.
\end{lem}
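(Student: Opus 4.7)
The plan is to argue by contradiction. Suppose $\phi$ is a good immersion of $K_{1,2,2}$ with two distinct regions $R_1$, $R_2$, each of whose boundary meets at least 4 of the 5 vertices of $V = \{a, b_1, b_2, c_1, c_2\}$. Since $|V| = 5$, the boundaries share at least 3 common vertices, i.e.\ $|V \cap \partial R_1 \cap \partial R_2| \geq 3$.

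My first step would be the standard ``face expansion'' trick: in each $R_i$, place a new vertex $x_i$ in the interior and join it by simple arcs to the four (or five) original vertices on $\partial R_i$, all arcs lying inside $R_i$. Since $R_i$ is an open region of $\phi$ and the arcs are routed in its interior, no new crossings are created, and $x_1$-arcs do not meet $x_2$-arcs because $R_1, R_2$ are disjoint. The outcome is a good immersion of a graph $G'$ on 7 vertices, in which the at least 8 new edges at $x_1, x_2$ are crossing-free and the remaining crossings are precisely those of $\phi$.

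Next, I would identify the structure of $G'$: with partition $\{a\}\mid\{b_1,b_2\}\mid\{c_1,c_2\}\mid\{x_1,x_2\}$, the graph $G'$ is a spanning subgraph of $K_{1,2,2,2}$ missing at most two edges (at most one from each $x_i$, corresponding to the vertex, if any, omitted from $\partial R_i$). I would then carry out a small case analysis on which vertex (if any) is omitted from each of $\partial R_1, \partial R_2$. For each case, I would locate either a $K_{3,3}$-subdivision or a $K_5$-subdivision in $G'$ that uses only the crossing-free $x_i$-arcs together with edges of $K_{1,2,2}$, and I would use that the non-edges $b_1b_2$ and $c_1c_2$ of $K_{1,2,2}$ can \emph{additionally} be inserted through $R_1$ and $R_2$ without extra crossings (since any 4-subset of $V$ contains one of these non-edges), yielding a crossing-free extension all the way to $K_5 \subset G' \cup \{b_1b_2, c_1c_2\}$.

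The main obstacle will be converting ``$G'$ contains a non-planar subgraph drawn with the star-edges crossing-free'' into a genuine structural contradiction rather than merely a lower bound $cr_\phi(E) \geq 1$. The delicate sub-case is when $V(\partial R_1) \cap V(\partial R_2) = \{a, b_i, c_j\}$, because then no auxiliary original vertex is adjacent to all three common vertices, so the $K_{3,3}$-subgraph is not immediate and one must use the remaining two vertices as internal subdivision vertices. Here the argument must show that the required $K_{3,3}$-subdivision's paths, together with the crossing-free star arcs at $x_1, x_2$, force a rotation system at the common vertices incompatible with $\phi$ being a good drawing — which is the combinatorial heart of the lemma.
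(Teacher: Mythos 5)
Your proposal has a genuine gap at exactly the point you flag as ``the combinatorial heart of the lemma'': the contradiction is never actually derived, and the sub-structure you are guaranteed to have is not strong enough to produce one. The face-expansion step is fine (placing $x_i$ in $R_i$ and joining it to the boundary vertices adds no crossings, and $|S_1\cap S_2|\geq 3$ where $S_i=V\cap\partial R_i$), but the only edges certified crossing-free are the two new stars, possibly the star at the singleton vertex $a$ (whose edges are pairwise adjacent, hence mutually non-crossing in a good drawing, though they may be crossed by the $b_ic_j$ edges), and --- granting the routing issue --- the non-edges $b_1b_2,c_1c_2$ when both endpoints lie in some $S_i$. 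In the sub-case $S_1=\{a,b_1,c_1,c_2\}$, $S_2=\{a,b_2,c_1,c_2\}$ this certified crossing-free graph ($a,x_1,x_2$ joined to their respective neighborhoods, plus $c_1c_2$) is planar, so no $K_5$- or $K_{3,3}$-subdivision with uncrossed edges exists there; any contradiction must use edges of $K_{1,2,2}$ itself, which in an arbitrary good immersion may carry crossings. Your closing sentence defers precisely this to an unspecified rotation-system argument, so the proof is not complete. Two smaller issues: inserting $b_1b_2$ through $R_i$ without crossing the star at $x_i$ is not automatic (it depends on the cyclic order of $S_i$ along $\partial R_i$, and $\partial R_i$ need not be a simple cycle), and a lower bound such as $cr_\phi(E)\geq 1$ is indeed no contradiction since $\phi$ is an arbitrary good immersion.

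For comparison, the paper's proof is a finite case check: every good drawing of $K_{1,2,2}$ restricts (by deleting the two edges inside one $2$-class) to one of Harborth's six non-isomorphic good drawings of $K_{2,3}$, and one verifies that only the drawing $D_1$ could conceivably yield two regions with at least four vertices on their boundaries, then checks that the (up to isomorphism unique) extension of $D_1$ to $K_{1,2,2}$ does not. If you want to salvage your approach, you would in effect have to carry out a comparable case analysis on the pairs $(S_1,S_2)$ together with the rotations at the shared vertices, which is no shorter than citing Harborth's classification directly.
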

\begin{proof}
From \cite{Harborth}, we know that there are 6 non-isomorphic
drawings of $K_{2,3}$, namely, the drawings in Figure 1.
\begin{center}
\begin{picture}(0,0)(170,35)
\put(0,20){$\circ$} \put(40,20){$\bullet$} \put(0,-20){$\bullet$}
\put(40,-20){$\circ$} \put(20,0){$\circ$}

\put(3,21){\line(0,-1){39}} \put(4,23){\line(1,0){39}}
\put(43,21){\line(0,-1){37}} \put(4,-17){\line(1,0){37}}
\put(3,-17){\line(1,1){19}} \put(42,21){\line(-1,-1){18}}

%%%%%%%%%%%%%%%%%%%

\put(55,20){$\circ$} \put(95,20){$\circ$} \put(55,-20){$\bullet$}
\put(95,-20){$\bullet$} \put(75,-30){$\circ$}

\put(58,21){\line(0,-1){39}} \put(98,21){\line(0,-1){37}}
\put(58,-17){\line(1,1){38}} \put(98,-17){\line(-1,1){39}}
\put(79,-27){\line(2,1){18}} \put(76,-27){\line(-2,1){18}}

%%%%%%%%%%%%%%%%%%%%%%%%%%%%%%%

\put(110,20){$\circ$} \put(150,20){$\circ$} \put(110,-20){$\bullet$}
\put(150,-20){$\bullet$} \put(125,-10){$\circ$}

\put(113,21){\line(0,-1){39}} \put(153,21){\line(0,-1){37}}
\put(153,-17){\line(-1,1){39}} \put(113,-17){\line(1,0){20}}
\put(133,-17){\line(1,2){19}} \put(113,-17){\line(3,2){13}}
\put(126,-8){\line(-1,0){20}} \put(106,-8){\line(0,-1){18}}
\put(106,-26){\line(1,0){47}}  \put(153,-26){\line(0,1){10}}

%%%%%%%%%%%%%%%%%%%%%%%%%%%%%%%%

\put(165,20){$\circ$} \put(205,20){$\circ$} \put(165,-20){$\bullet$}
\put(205,-20){$\bullet$} \put(180,-10){$\circ$}

\put(168,21){\line(0,-1){39}} \put(208,21){\line(0,-1){37}}
\put(208,-17){\line(-1,1){39}} \put(168,-17){\line(1,0){20}}
\put(188,-17){\line(1,2){19}} \put(168,-17){\line(3,2){13}}
\put(208,-19){\line(-2,1){23}}

%%%%%%%%%%%%%%%%%%%%%%%%%%%%%%%%

\put(218,20){$\circ$} \put(260,20){$\bullet$} \put(218,-20){$\circ$}
\put(260,-20){$\bullet$}
 \put(218,0){$\circ$}

\put(222,22){\line(1,0){40}} \put(222,22){\line(1,-1){40}}
\put(262,22){\line(-1,-1){40}} \put(262,-18){\line(-1,0){40}}
\put(222,2){\line(2,1){40}} \put(222,2){\line(2,-1){40}}

%%%%%%%%%%%%%%%%%%%%%%%%%

\put(290,20){$\circ$} \put(330,20){$\circ$} \put(290,-20){$\bullet$}
\put(330,-20){$\bullet$} \put(305,-10){$\circ$}

\put(293,21){\line(0,-1){39}} \put(333,21){\line(0,-1){37}}
\put(333,-17){\line(-1,1){39}} \put(293,-17){\line(1,0){20}}
\put(313,-17){\line(1,2){19}} \put(333,-19){\line(-2,1){24}}
\put(307,-6){\line(0,1){40}} \put(307,34){\line(-1,0){25}}
\put(282,34){\line(0,-1){51}} \put(282,-17){\line(1,0){10}}

%%%%%%%%%%%%%%%%
\put(20,-40){$D_1$} \put(72,-40){$D_2$} \put(125,-40){$D_3$}
\put(182,-40){$D_4$} \put(239,-40){$D_5$} \put(300,-40){$D_6$}

\put(150,-60){Figure 1.}
\end{picture}
\end{center}
$$$$\\\\\\\\\\

To obtain a drawing of $K_{1,2,2}$ from these drawings of $K_{2,3}$,
we have to choose a vertex from the partition of $K_{2,3}$
containing 3 vertices and draw edges connecting this vertex and the
other two vertices in the partition. For $2\leq i\leq 6$, $D_i$ has
no more than 1 region whose boundary contains at least 4 vertices.
Therefore, $D_1$ is the only drawing of $K_{2,3}$ which can be used
to obtain a possible counterexample of Lemma \ref{24}.

\begin{center}
\begin{picture}(0,0)(70,20)
\put(0,20){$\circ$} \put(40,20){$\bullet$} \put(0,-20){$\bullet$}
\put(40,-20){$\circ$} \put(19,0){$\blacktriangle$}

\put(3,21){\line(0,-1){39}} \put(4,23){\line(1,0){39}}
\put(43,21){\line(0,-1){37}} \put(4,-17){\line(1,0){37}}
\put(3,-17){\line(1,1){19}} \put(42,21){\line(-1,-1){18}}

\put(24,1){\line(1,-1){17}} \put(22,4){\line(-1,1){18}}
\end{picture}
\begin{picture}(0,0)(-30,20)
\put(0,20){$\circ$} \put(40,20){$\bullet$} \put(0,-20){$\bullet$}
\put(40,-20){$\circ$} \put(20,0){$\blacktriangle$}

\put(3,21){\line(0,-1){39}} \put(4,23){\line(1,0){39}}
\put(43,21){\line(0,-1){37}} \put(4,-17){\line(1,0){37}}
\put(3,-17){\line(1,1){19}} \put(42,21){\line(-1,-1){18}}

\put(24,1){\line(1,-1){17}} \put(23,2){\line(0,-1){30}}
\put(23,-28){\line(-1,0){30}} \put(-7,-28){\line(0,1){51}}
\put(-7,23){\line(1,0){8}}

\put(-55,-50){Figure 2.}
\end{picture}
\end{center}
$$$$\\\\\\

In $D_1$, we have to choose a vertex from the partition of $K_{2,3}$
with 3 vertices (that is, the white vertices in $D_1$) and draw
edges connecting this vertex and the other two vertices in the
partition. Up to isomorphism, the only possible drawing of
$K_{1,2,2}$ is as in Figure 2, which proves the lemma.
\end{proof}

\begin{lem}\label{35}
There are 3 non-isomorphic drawings of $K_{1,2,2}$ such that it
contains a region whose boundary contains more than 4 vertices in
$V$.
\end{lem}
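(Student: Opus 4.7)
The plan is to extend the case analysis begun in Lemma \ref{24}. Every drawing of $K_{1,2,2}$ restricts, upon deletion of the two edges joining one distinguished vertex $w$ in the $3$-element part to the remaining vertices of that part, to a drawing of $K_{2,3}$, hence to one of the six drawings $D_1,\dots,D_6$ of Figure 1. Reversing the procedure, every drawing of $K_{1,2,2}$ is obtained from some $D_i$ by choosing a white vertex $w$ in the $3$-element part of $K_{2,3}$ and routing two new edges from $w$, through the faces of $D_i$, to the remaining two white vertices. Note that since $|V|=5$, a region whose boundary contains more than $4$ vertices must contain all $5$.

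First I would list, for each $D_i$, the faces together with the vertex sets on their boundaries. Then for each choice of $D_i$ and of distinguished vertex $w$, I would determine whether the two new edges can be routed so that the resulting drawing of $K_{1,2,2}$ has a face whose boundary contains all five vertices. In $D_2,\dots,D_6$ the situation is tightly constrained by the observation already exploited in Lemma \ref{24} that at most one face carries four vertices on its boundary, so only a limited number of routings are compatible with producing a five-vertex face; most are quickly eliminated. In $D_1$ more freedom is available (owing to its $4$-fold symmetry), and it is here that multiple admissible configurations arise.

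Finally I would collect the surviving drawings and sort them into isomorphism classes under the natural symmetries of $K_{1,2,2}$: the transpositions within each $2$-element part, and the swap of the two $2$-element parts. The expected outcome is that exactly three non-isomorphic drawings possess a face whose boundary meets every vertex. The main obstacle is bookkeeping, namely ensuring that no admissible routing is overlooked and that no two isomorphic drawings are counted separately. As a practical invariant for comparing candidate drawings I would record, for each face, the cyclic sequence of vertex labels encountered along its boundary walk, since this data is preserved under graph isomorphism and provides a clean way to identify duplicates after the enumeration.
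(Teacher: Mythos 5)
Your overall strategy---extend each of Harborth's six drawings of $K_{2,3}$ by two edges from a distinguished vertex of the $3$-part, test for a face meeting all five vertices, and classify the survivors up to isomorphism---is exactly the paper's strategy, and your observation that ``more than $4$ vertices'' means ``all $5$'' is correct. The key structural fact you should make explicit (and implicitly rely on) is that adding edges only subdivides faces, so every face of the resulting $K_{1,2,2}$ drawing lies inside a face of the underlying $K_{2,3}$ drawing and its boundary vertices form a subset of that face's boundary vertices. Hence a $D_i$ can contribute only if $D_i$ itself already has a face whose boundary contains all five vertices.

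This is where your proposal goes wrong: you assert that the admissible configurations arise from $D_1$ and that $D_2,\dots,D_6$ are ``quickly eliminated.'' In fact $D_1$ is the planar drawing of $K_{2,3}$; by Euler's formula it has exactly three faces, and since the graph is bipartite with $6$ edges each face is a quadrilateral with exactly $4$ boundary vertices. So no face of $D_1$, and therefore no face of any $K_{1,2,2}$ drawing extending $D_1$, can contain all five vertices---consistent with Lemma \ref{24}, whose proof shows the unique extension of $D_1$ has only one region with (exactly) $4$ boundary vertices. The correct candidates are $D_2$ and $D_5$, the two drawings in which all five vertices lie on the boundary of the unbounded face, and the three drawings of Figure 3 are obtained by routing the two new edges so as to preserve that face. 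As written, your enumeration would search in the wrong place and miss all three drawings; fix the selection criterion and the rest of your plan (including the cyclic-boundary-walk invariant for sorting isomorphism classes) goes through.
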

\begin{proof}
From \cite{Harborth} again, we know that there are 6 non-isomorphic
drawings of $K_{2,3}$, namely, the drawings in Figure 1. To obtain a
drawing of $K_{1,2,2}$ from these drawing of $K_{2,3}$ such that
there is a region whose boundary contains more than 4 vertices, the
only candidates are $D_2$ and $D_5$. To preserve the region whose
boundary containing more than $4$ vertices, the only possible
drawings of $K_{1,2,2}$, up to isomorphism, are shown in Figure 3.\\

\begin{center}
\begin{picture}(0,0)(105,15)

\put(5,20){$\circ$} \put(45,20){$\circ$} \put(5,-20){$\bullet$}
\put(45,-20){$\bullet$} \put(23,-30){$\blacktriangle$}

\put(8,21){\line(0,-1){39}} \put(48,21){\line(0,-1){37}}
\put(8,-17){\line(1,1){38}} \put(48,-17){\line(-1,1){39}}
\put(29,-27){\line(2,1){18}} \put(26,-27){\line(-2,1){18}}
\put(28,-27){\line(2,5){19}} \put(28,-27){\line(-2,5){19}}
\end{picture}
\begin{picture}(0,0)(30,15)

\put(5,20){$\circ$} \put(44,20){$\blacktriangle$}
\put(5,-20){$\bullet$} \put(45,-20){$\bullet$}
\put(25,-30){$\circ$}

\put(8,21){\line(0,-1){39}} \put(48,21){\line(0,-1){37}}
\put(8,-17){\line(1,1){38}} \put(48,-17){\line(-1,1){39}}
\put(29,-27){\line(2,1){18}} \put(26,-27){\line(-2,1){18}}
\put(28,-27){\line(2,5){19}} \put(48,23){\line(-1,0){39}}
\end{picture}
\begin{picture}(10,0)(0,15)

\put(55,20){$\bullet$} \put(95,20){$\circ$}
\put(55,-20){$\bullet$} \put(95,-20){$\circ$}
\put(74,-30){$\blacktriangle$}

\put(96,-17){\line(-1,0){39}} \put(96,23){\line(-1,0){37}}
\put(58,-17){\line(1,1){38}} \put(98,-17){\line(-1,1){39}}
\put(79,-27){\line(2,1){18}} \put(76,-27){\line(-2,1){18}}
\put(78,-27){\line(2,5){19}} \put(78,-27){\line(-2,5){20}}

%%%%%%%%%%%%%%%%
\put(-90,-43){$D_1$} \put(-10,-43){$D_2$} \put(75,-43){$D_3$}

\put(-25,-65){Figure 3.}
\end{picture}
\end{center}
$$$$\\\\\\\\
\end{proof}

\section{Crossing number of $K_{1,1,1,1,n}$}

In this section, we will prove
\begin{thm}\label{1}
The crossing number of the complete 5-partite graph
$K_{1,1,1,1,n}$ is given by  $$cr(K_{1,1,1,1,n})=Z(4,n)+n.$$
\end{thm}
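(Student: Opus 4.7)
Write $T=\{v_1,v_2,v_3,v_4\}$ for the four singleton parts and $S=\{u_1,\ldots,u_n\}$ for the large part; let $E_T$ denote the six edges of $\langle T\rangle\cong K_4$, let $F_i$ denote the four edges joining $u_i$ to $T$, and set $E_S=\bigcup_{i=1}^n F_i$, which is the edge set of the $K_{4,n}$ between $T$ and $S$. Since the edges of $F_i$ all meet at $u_i$, $cr_\phi(F_i)=0$ for any good drawing $\phi$.

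For the upper bound, the plan is to start from the classical Zarankiewicz drawing of $K_{4,n}$, which places $v_1,v_2,v_3,v_4$ on the $y$-axis at $(0,2),(0,1),(0,-1),(0,-2)$ and the $n$ vertices of $S$ on the $x$-axis split as evenly as possible, and draws straight-line edges to realize exactly $Z(4,n)$ crossings. To this I add the six edges of $E_T$: the three consecutive $y$-axis edges $v_1v_2,\,v_2v_3,\,v_3v_4$ and the extreme edge $v_1v_4$ routed through the unbounded region contribute no new crossings, while the ``skip'' edges $v_1v_3$ and $v_2v_4$ are drawn as small detours around $v_2$ and $v_3$ on opposite halves of the plane, picking up $\lfloor n/2\rfloor$ and $\lceil n/2\rceil$ crossings with the stars at $v_2$ and $v_3$ respectively. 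The total is $Z(4,n)+n$.

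For the lower bound, take an arbitrary good drawing $\phi$. The classical theorem for $K_{4,n}$ gives $cr_\phi(E_S)\ge Z(4,n)$. For each $u_i$ the subgraph $\langle T\cup\{u_i\}\rangle$ is $K_5$, so the restriction of $\phi$ to this subgraph has at least $cr(K_5)=1$ crossings; since $cr_\phi(F_i)=0$, formula (\ref{3}) yields $cr_\phi(E_T)+cr_\phi(E_T,F_i)\ge 1$. Summing over $i=1,\ldots,n$ via (\ref{4}) produces the key inequality
\[
n\cdot cr_\phi(E_T)+cr_\phi(E_T,E_S)\ge n.
\]
In view of (\ref{3}) and $cr_\phi(E_S)\ge Z(4,n)$, it suffices to prove $cr_\phi(E_T)+cr_\phi(E_T,E_S)\ge n$, and this is immediate from the displayed inequality whenever $cr_\phi(E_T)=0$.

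The remaining case $k:=cr_\phi(E_T)\ge 1$ is the main obstacle. The plan is a structural analysis of the subdrawing of $\langle T\rangle$: viewing each crossing as a virtual vertex, Euler's formula yields $4+k$ faces, and a degree-sum count gives $\sum_R g(R)\le 12$, where $g(R)$ is the number of $T$-vertices on the boundary of the face $R$. A Jordan-curve argument then produces $cr_\phi(F_i,E_T)\ge 4-g(R_i)$ for any $u_i$ lying in the face $R_i$. In the $k=1$ case the only face with $g(R)=4$ is the outer face of the convex-diagonals drawing; any $p$ vertices of $S$ placed there save on $cr_\phi(E_T,E_S)$ but must form a $K_{4,p}$ subdrawing in an annular region with $v_1,v_2,v_3,v_4$ in cyclic order on its inner boundary, and a pairwise argument shows such a drawing forces at least $p(p-1)$ crossings among $E_S$. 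The hard step will be to balance these three contributions---$k$ crossings in $E_T$, $2(n-p)$ crossings from the peripheral $u_j$'s, and $p(p-1)$ crossings from the clustered group---against $Z(4,n)$ for every admissible $p$, and to extend the analysis to $k\in\{2,3\}$, where the face structure is richer and cross-group $F_i$-vs-$F_j$ crossings must also be accounted for.
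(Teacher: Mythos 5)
Your upper bound and your treatment of the case $cr_\phi(E_T)=0$ are fine, and the averaged inequality $n\,cr_\phi(E_T)+cr_\phi(E_T,E_S)\ge n$, obtained by summing the $K_5$ bound over the stars $F_i$, is correct. The problem is that this inequality carries no information once $cr_\phi(E_T)\ge 1$, and everything you write for that case is an announced plan rather than a proof. At least three essential claims are left unestablished: (i) that $p$ stars placed in the face of the one-crossing drawing of $\langle T\rangle$ having all four vertices on its boundary force at least $p(p-1)$ crossings among themselves (a cylindrical-type lower bound that needs its own argument); (ii) that these forced crossings can be added on top of the global bound $cr_\phi(E_S)\ge Z(4,n)$ without double counting --- they cannot, since $Z(4,n)$ already counts all crossings inside $E_S$, so you would need a refined estimate of the form $cr_\phi(E_S)\ge Z(4,n-p)+\cdots$, which is not supplied; and (iii) the cases $cr_\phi(E_T)\in\{2,3\}$, or a proof that a good drawing of $K_4$ admits at most one crossing. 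As it stands the lower bound is not proved.

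The paper avoids all of this with a different pivot. Instead of casing on $cr_\phi(E_T)$, it notes that if $cr_\phi(E)<Z(4,n)+n$ then, since $cr_\phi(E_S)\ge Z(4,n)$, some star, say $E(z_1)$, must satisfy $cr_\phi(E_T,E(z_1))=0$; this forces $E_T\cup E(z_1)$ to be drawn as the one-crossing drawing of $K_5$, every region of which has at most three vertices on its boundary, whence $cr_\phi(E_T\cup E(z_1),E(z_i))\ge 2$ for every $i\ge 2$. Combining this with $cr_\phi(\bigcup_{i\ge 2}E(z_i))\ge Z(4,n-1)$ gives $cr_\phi(E)\ge 1+Z(4,n-1)+2(n-1)\ge Z(4,n)+n$, using $Z(4,n)-Z(4,n-1)=2\lfloor (n-1)/2\rfloor\le n-1$. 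If you want to complete your argument, this is the missing move: absorb one cheap star into the $K_4$ to obtain a rigid $K_5$ subdrawing and charge the remaining $n-1$ stars against it, rather than analyzing the faces of $K_4$ alone.
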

\begin{proof}
Let $(X,Y,S,T,Z)$ be partition of $K_{1,1,1,1,n}$ such that $X=\{
x_1\}$, $Y=\{ y_1\}$, $S=\{ s_1\}$, $T=\{ t_1\}$ and
$Z=\displaystyle\bigcup_{i=1}^n\{z_i\}$. To show that
$cr(K_{1,1,1,1,n})\leq Z(4,n)+n,$ see Figure 4 for $n=4$ and it can
be generalized to $n$.

\begin{center}
\begin{picture}(60,30)(-30,20)
\put(0,0){$\bullet$} \put(1,39){$\bullet$}
\put(-11,-40){$\bullet$} \put(0,-81){$\bullet$}

\put(5,3){$x_1$} \put(10,41){$s_1$} \put(-17,-45){$y_1$}
\put(-13,-81){$t_1$}

\put(2,2){\line(0,-1){80}} \put(2,2){\line(-1,-4){10}}
\put(-8,-38){\line(1,-4){10}} \put(3,2){\line(0,1){40}}
\put(-10,-38){\line(1,6){13}}

\put(20,-20){$\bullet$} \put(60,-22){$\bullet$}
\put(-40,-20){$\bullet$} \put(-80,-21){$\bullet$}

\put(8,-18){$z_2$} \put(-49,-20){$z_1$} \put(67,-20){$z_4$}
\put(-89,-21){$z_3$}

\put(22,-18){\line(-1,1){20}} \put(24,-18){\line(-1,3){20}}
\put(24,-17){\line(-3,-2){32}} \put(24,-18){\line(-1,-3){20}}

\put(62,-18){\line(-3,1){60}} \put(64,-18){\line(-1,1){60}}
 \put(64,-20){\line(-4,-1){71}} \put(64,-18){\line(-1,-1){60}}

\put(-38,-18){\line(2,1){40}} \put(-38,-18){\line(2,3){40}}
\put(-38,-18){\line(3,-2){28}} \put(-38,-18){\line(2,-3){40}}

\put(-78,-18){\line(4,1){80}} \put(-78,-17){\line(4,3){80}}
\put(-78,-20){\line(4,-1){70}} \put(-78,-20){\line(4,-3){80}}

\put(95,-18){\line(-3,2){90}} \put(95,-18){\line(-3,-2){90}}

\put(-20,-108){Figure 4.}
\end{picture}
\end{center}
$$$$\\\\\\\\\\\\\\\\

Therefore it suffices to show that
\begin{eqnarray}\label{2}
cr(K_{1,1,1,1,n})\geq Z(4,n)+n,
\end{eqnarray}
Note that $K_{1,1,1,1,1}$ is isomorphic to the complete graph of
order 5, $K_5$. Therefore, $cr(K_{1,1,1,1,1})=cr(K_5)=1$, which
shows that (\ref{2}) is true for $n=1$.

 Now suppose $n\geq 1$. If
$cr(K_{1,1,1,1,n})<Z(4,n)+n$, then there exists a good immersion
$\phi$ of $K_{1,1,1,1,n}$ such that
\begin{eqnarray}\label{6}
cr_{\phi}(E)<Z(4,n)+n.
\end{eqnarray}

Let $W=E_{XY}\cup E_{XS}\cup E_{XT} \cup E_{YS}\cup E_{YT}\cup
E_{ST}$. Then, by (\ref{3}) and (\ref{4}), we have
\begin{eqnarray}\label{5}
\begin{array}{rcl}
cr_\phi(E)&=&\displaystyle cr_\phi({W})+cr_\phi(\bigcup_{i=1}^n
E(z_i))+ cr_\phi({W},\bigcup_{i=1}^n E(z_i))\\
&=&\displaystyle cr_\phi({W})+cr_\phi(\bigcup_{i=1}^n
E(z_i))+\sum_{i=1}^n cr_\phi({W},E(z_i)).
\end{array}
\end{eqnarray}
Since $\langle\displaystyle\bigcup_{i=1}^n E(z_i)\rangle\cong
K_{4,n}$, by (\ref{21}), we have
\begin{eqnarray}\label{7}
cr_\phi(\bigcup_{i=1}^n E(z_i))\geq Z(4,n).
\end{eqnarray}
Therefore, (\ref{6}), (\ref{5}) and (\ref{7}) gives
$cr_\phi({W},E(z_i))=0$ for some $i.$ By reordering of $i$, we may
suppose
\begin{eqnarray}\label{22}
cr_\phi({W},E(z_1))=0.
\end{eqnarray}
Then $\phi(\langle W\rangle)$ divides $\mathbb{R}^2$ into regions
and the condition (\ref{22}) implies that $\phi(X\cup Y\cup S\cup
T)$ is contained in the boundary of one of the regions. Denote
$F=W\cup E(z_1)$. Then, without loss of generality, we may assume
that $\phi| \langle F\rangle$ has the drawing as in Figure 5.

\begin{center}
\begin{picture}(0,0)(20,20)
\put(-20,0){$\bullet$} \put(20,0){$\bullet$}
\put(-20,-40){$\bullet$} \put(20,-40){$\bullet$}
\put(60,-20){$\circ$} \put(68,-20){$z_1$}

\put(-18,2){\line(0,-1){40}} \put(-18,2){\line(1,0){40}}
\put(-18,2){\line(1,-1){40}} \put(22,2){\line(0,-1){40}}
\put(22,2){\line(-1,-1){40}} \put(22,-38){\line(-1,0){40}}

\put(61,-17){\line(-2,1){37}} \put(61,-17){\line(-2,-1){37}}
\put(61,-17){\line(-1,-1){40}} \put(21,-57){\line(-1,0){38}}
\put(-17,-57){\line(0,1){20}} \put(61,-17){\line(-1,1){40}}
\put(21,23){\line(-1,0){38}} \put(-17,23){\line(0,-1){20}}

\put(0,-80){Figure 5.}
\end{picture}
\end{center}
$$$$\\\\\\\\\\\\

For $i\geq 2$, no matter which regions $\phi(z_i)$ is drawn,
\begin{eqnarray}
\label{8} cr_\phi(F,E(z_i))\geq 2.
\end{eqnarray}
Also, by (\ref{3}) and (\ref{4}), we have
\begin{eqnarray}\label{10}
cr_\phi(E)=cr_\phi(F)+cr_\phi(\bigcup_{i=2}^n E(z_i))+\sum_{i=2}^n
cr_\phi(F,E(z_i)).
\end{eqnarray}

Since $\langle\displaystyle\bigcup_{i=2}^n E(z_i)\rangle\cong
K_{4,n-1}$, by (\ref{21}), we have
\begin{eqnarray}\label{9} cr_\phi(\bigcup_{i=2}^n E(z_i))\geq
Z(4,n-1).
\end{eqnarray}
Combining (\ref{8}), (\ref{10}) and (\ref{9}) and the fact that
$cr_\phi(F)=1$, we have $cr_\phi(E)\geq 1+Z(4,n-1)+2(n-1)\geq
Z(4,n)+n,$ which contradicts (\ref{6}).
\end{proof}

\section{Crossing number of $K_{1,2,2,n}$}

This section is devoted to proving the following theorem.

\begin{thm}
The crossing number of the complete 4-partite graph $K_{1,2,2,n}$
is given by
$$cr(K_{1,2,2,n})=Z(5,n)+\lfloor\frac{3n}{2}\rfloor.$$
\end{thm}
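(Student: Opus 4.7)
I plan to follow the scheme used in Theorem \ref{1} and in Asano: produce an explicit drawing for the upper bound and, for the lower bound, decompose the crossings of a hypothetical cheaper drawing into a Kleitman piece plus a piece controlled by Lemmas \ref{24} and \ref{35}.

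\textbf{Upper bound.} I would construct a good immersion with exactly $Z(5,n)+\lfloor 3n/2\rfloor$ crossings. Start from Zarankiewicz's optimal drawing of $K_{5,n}$ on the vertex set $(X\cup Y\cup S)\cup Z$, then overlay the eight edges of the induced $K_{1,2,2}$ on $X\cup Y\cup S$ close to the 5-vertex axis so that every two consecutive $z_i$'s placed on the same side jointly contribute $3$ crossings with these added edges, producing $\lfloor 3n/2\rfloor$ additional crossings in total.

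\textbf{Lower bound.} Argue by induction on $n$, verifying $n=1$ directly (here $K_{1,2,2,1}\cong K_{1,1,2,2}$ is $K_6$ minus a perfect matching on one pair of parts, so by Euler's formula $cr\geq 1=Z(5,1)+\lfloor 3/2\rfloor$). For the inductive step, suppose $\phi$ is a good immersion with $cr_\phi(E)<Z(5,n)+\lfloor 3n/2\rfloor$, and set $W=E_{XY}\cup E_{XS}\cup E_{YS}$, the edge set of the induced $K_{1,2,2}$. By (\ref{3}) and (\ref{4}),
$$cr_\phi(E)=cr_\phi(W)+cr_\phi\Big(\bigcup_{i=1}^n E(z_i)\Big)+\sum_{i=1}^n cr_\phi(W,E(z_i)).$$
Since $\langle\bigcup_i E(z_i)\rangle\cong K_{5,n}$, Kleitman's theorem yields $cr_\phi(\bigcup_i E(z_i))\geq Z(5,n)$, hence
$$cr_\phi(W)+\sum_{i=1}^n cr_\phi(W,E(z_i))<\Big\lfloor\frac{3n}{2}\Big\rfloor. \qquad(\star)$$
For each $z_i$, let $R_i$ be the region of $\phi(\langle W\rangle)$ containing $\phi(z_i)$ and let $v(R_i)$ be the number of vertices of $X\cup Y\cup S$ on $\partial R_i$; every edge $z_iu$ with $u\notin\partial R_i$ must cross $W$, so $cr_\phi(W,E(z_i))\geq 5-v(R_i)$. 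By Lemma \ref{24} at most one region $R^*$ satisfies $v(R^*)\geq 4$, and by Lemma \ref{35} if $v(R^*)=5$ then $\phi(\langle W\rangle)$ is one of the three drawings of Figure 3. Let $a$ be the number of $z_i$'s lying in $R^*$. When $v(R^*)\leq 4$, one has $\sum cr_\phi(W,E(z_i))\geq a+2(n-a)=2n-a$, so $(\star)$ forces $a>\lceil n/2\rceil$; when $v(R^*)=5$, I would inspect each of the three admissible drawings, compute $cr_\phi(W)$ and the forced contribution from the $z_i$'s outside $R^*$, and check that $(\star)$ fails.

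\textbf{Main obstacle.} The delicate remaining subcase is $v(R^*)=4$ with $a>\lceil n/2\rceil$: more than half the $z_i$'s crowd into one 4-vertex region, and the region count alone gives only the trivial bound $cr_\phi(W,E(z_i))\geq 1$. To extract the missing $a-\lceil n/2\rceil$ crossings, I plan to apply the inductive hypothesis to the two subdrawings induced on $X\cup Y\cup S\cup\{z_i:z_i\in R^*\}$ and on $X\cup Y\cup S\cup\{z_i:z_i\notin R^*\}$ (isomorphic to $K_{1,2,2,a}$ and $K_{1,2,2,n-a}$), obtaining crossing counts at least $Z(5,a)+\lfloor 3a/2\rfloor$ and $Z(5,n-a)+\lfloor 3(n-a)/2\rfloor$ respectively, and combine these with a pairing argument among the $a$ edges $z_iv^*$ exiting $R^*$ (where $v^*\in X\cup Y\cup S$ is the unique vertex missing from $\partial R^*$) to contradict $(\star)$. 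I expect this pairing step, showing that consecutive exiting edges must share at least one extra crossing not already counted by the Kleitman bound inside $R^*$, to be the most technical part of the argument.
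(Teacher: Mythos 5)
Your skeleton (induction, the decomposition $cr_\phi(E)=cr_\phi(W)+cr_\phi(\bigcup_i E(z_i))+\sum_i cr_\phi(W,E(z_i))$, Kleitman's bound, and the region count $cr_\phi(W,E(z_i))\geq 5-v(R_i)$ combined with Lemmas \ref{24} and \ref{35}) is exactly the paper's starting point, but both of the hard cases are left with genuine gaps. First, in the case $v(R^*)=5$ your plan is to ``check that $(\star)$ fails,'' but $(\star)$ need not fail there: in the drawing $D_2$ of Figure 3 a vertex $z_i$ placed in the unbounded region can satisfy $cr_\phi(W,E(z_i))=0$, so if many $z_i$ sit in $R^*$ the left side of $(\star)$ can be as small as $cr_\phi(W)\leq 2$. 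The missing crossings in that configuration live in $cr_\phi(E(z_i),E(z_j))$ \emph{above} the Kleitman bound, which $(\star)$ cannot see; the paper recovers them by anchoring on one $z_1$ with $cr_\phi(W,E(z_1))=0$ and proving $cr_\phi(W\cup E(z_1),E(z_j))\geq 4$ for every other $z_j$ (its Case 1, Figure 9), which is a statement about $F=W\cup E(z_1)$, not about $W$ alone.

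Second, in the crowded case $v(R^*)=4$ with $a>\lceil n/2\rceil$, your proposed repair --- applying the inductive hypothesis to the two induced copies of $K_{1,2,2,a}$ and $K_{1,2,2,n-a}$ --- discards all crossings between $\bigcup_{z_i\in R^*}E(z_i)$ and $\bigcup_{z_j\notin R^*}E(z_j)$, and $Z(5,a)+Z(5,n-a)$ falls short of $Z(5,n)$ by a quantity that is quadratic in $n$; the unspecified ``pairing argument'' would have to recoup all of that, i.e.\ essentially redo Kleitman's counting, so the combination as stated cannot close. The paper's route here is different and genuinely needed: it fixes one $z_1$ in $f$ with $cr_\phi(W,E(z_1))=1$, classifies each other such $z_i$ by which of the four boundary arcs $\phi(z_ia_5)$ crosses, derives $cr_\phi(F,E(z_i))\geq 5,4,3$ accordingly, and in the residual subcase where some $z_2$ has $cr_\phi(F,E(z_2))=cr_\phi(W,E(z_2))=2$ it deletes $z_1,z_2$, applies the inductive hypothesis to the single graph $K_{1,2,2,n-2}$, and uses the $K_{5,3}$ bound $cr_\phi(E(z_1)\cup E(z_2),E(z_k))\geq 4$ to restore the cross terms. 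Relatedly, you verify only $n=1$, but $n=2$ is a separate, nontrivial base case (the paper spends three subcases on it, including the observation that two stars forced into the same 4-vertex region with one crossing each must cross one another), and it is required because the induction descends by two in the subcase just described.
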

\begin{proof}
Let $(X,Y,U,Z)$ be partition of $K_{1,2,2,n}$ such that $X=\{x_1\}$,
$Y=\{ y_1, y_2\}$, $U=\{ u_1, u_2\}$ and
$Z=\displaystyle\bigcup_{i=1}^n\{z_i\}$. To show that
$cr(K_{1,2,2,n})\leq
Z(5,n)+\displaystyle\lfloor\frac{3n}{2}\rfloor,$ consider Figure 6
for $n=4$, and it can be generalized to general $n$.

\begin{center}
\begin{picture}(60,30)(-30,30)
\put(0,0){$\bullet$} \put(1,40){$\bullet$}
\put(-11,-40){$\bullet$} \put(0,-81){$\bullet$}
\put(0,-121){$\bullet$}

\put(5,3){$y_1$} \put(-13,41){$u_2$} \put(-5,-45){$u_1$}
\put(0,-66){$y_2$} \put(-15,-121){$x_1$}

\put(2,2){\line(-1,-4){10}} \put(-8,-38){\line(1,-4){10}}
\put(3,2){\line(0,1){40}}

\put(20,-20){$\bullet$} \put(60,-22){$\bullet$}
\put(-40,-20){$\bullet$} \put(-80,-21){$\bullet$}

\put(8,-18){$z_3$} \put(-49,-20){$z_1$} \put(67,-20){$z_4$}
\put(-89,-21){$z_2$}

\put(22,-18){\line(-1,1){20}} \put(24,-18){\line(-1,3){20}}
\put(24,-17){\line(-3,-2){32}} \put(24,-18){\line(-1,-3){20}}

\put(62,-18){\line(-3,1){60}} \put(64,-18){\line(-1,1){60}}
 \put(64,-20){\line(-4,-1){71}} \put(64,-18){\line(-1,-1){60}}

\put(-38,-18){\line(2,1){40}} \put(-38,-18){\line(2,3){40}}
\put(-38,-18){\line(3,-2){28}} \put(-38,-18){\line(2,-3){40}}

\put(-78,-18){\line(4,1){80}} \put(-78,-17){\line(4,3){80}}
\put(-78,-20){\line(4,-1){70}} \put(-78,-20){\line(4,-3){80}}

\put(95,-18){\line(-3,2){90}} \put(95,-18){\line(-3,-2){90}}

\put(4,-119){\line(1,5){20}} \put(3,-119){\line(0,1){40}}
\put(4,-119){\line(3,5){60}} \put(2,-119){\line(-2,5){40}}
\put(2,-119){\line(-2,5){40}} \put(1,-119){\line(-4,5){80}}

\put(-122,-17){\line(6,1){123}} \put(-122,-17){\line(6,-5){123}}

 \put(-10,-38){\line(1,-6){13}}

 \put(4,-119){\line(3,2){153}} \put(157,-17){\line(-5,2){153}}

 \put(-20,-149){Figure 6.}
\end{picture}
\end{center}
$$$$\\\\\\\\\\\\\\\\\\\\\\\\\\

Therefore it is
sufficient to prove that
\begin{eqnarray}\label{26}
cr(K_{1,2,2,n})\geq Z(5,n)+\displaystyle\lfloor\frac{3n}{2}\rfloor.
\end{eqnarray}
We will prove (\ref{26}) by induction on $n$. For $n=1$,
$K_{1,2,2,1}$ contains $K_{3,3}$ and it is clear that $K_{3,3}$ is
nonplanar, therefore $cr(K_{1,2,2,1})\geq 1$. Therefore (\ref{26})
is true for $n=1$. For $n=2$, consider a good immersion $\phi$ of
$K_{1,2,2,2}$. Note that $\langle E_{XY}\cup E_{XU}\cup
E_{YU}\rangle\cong K_{1,2,2}$, by Lemma \ref{24}, any drawings of
$K_{1,2,2}$ has at most one region which contains at least 4
vertices. We will consider
three cases:\\
\textbf{Case A.} If all the regions of the drawing $\phi(\langle
E_{XY}\cup E_{XU}\cup E_{YU}\rangle)$ whose boundary contains less
than 4 vertices in $\phi(X\cup Y\cup U)$, then $cr_\phi(E(z_i),
E_{XY}\cup E_{XU}\cup E_{YU})\geq 2$ for $i=1, 2$ which implies
that
$cr_\phi(E)\geq 4$. Therefore (\ref{26}) is true for this case.\\
\textbf{Case B.} Suppose that the drawing $\phi(\langle E_{XY}\cup
E_{XU}\cup E_{YU}\rangle)$ contains a unique region, $f$, whose
boundary contains exactly 4 vertices in $\phi(X\cup Y\cup U)$. If
there exists $z_i$, say $z_1$, such that $\phi(z_i)$ is not
contained in $f$, then $cr_\phi(E(z_1), E_{XY}\cup E_{XU}\cup
E_{YU})\geq 2$. Note also that $cr_\phi(E(z_2), E_{XY}\cup
E_{XU}\cup E_{YU})\geq 1$ since all drawings of $K_{1,2,2}$ have
only one region which contains at least 4 vertices in $\phi(V)$.
This gives $cr_\phi(E)\geq 3$. Now suppose that all $z_i$ whose
image under $\phi$ are contained in $f$. If $cr_\phi(E(z_i),
E_{XY}\cup E_{XU}\cup E_{YU})>1$ for some $i$, then the proof is the
same as before.  If $cr_\phi(E(z_i), E_{XY}\cup E_{XU}\cup
E_{YU})=1$ for $i=1, 2$, then both $\phi(z_1)$ and $\phi(z_2)$ must
lies in $f$. Thus $cr_\phi(E(z_1),E(z_2))\geq 1$ (see Figure 7).
This gives $cr_\phi(E)\geq 3$. Therefore (\ref{26}) is true for this
case.

\begin{center}
\begin{picture}(0,0)(80,10)
\put(20,0){$\bullet$}
 \put(20,-40){$\bullet$}
 \put(-20,-40){$\bullet$}
 \put(-20,0){$\bullet$}

\put(22,2){\line(0,-1){40}} \put(22,2){\line(-1,0){40}}
\put(22,-38){\line(-1,0){40}} \put(-18,-38){\line(0,1){40}}

\put(22,2){\line(-1,-1){20}} \put(-18,2){\line(1,-1){40}}
\put(-18,-38){\line(1,1){20}}

\put(2,-18){\line(1,0){40}} \put(0,-20){$\bullet$}
\put(8,-16){$z_1$}

\put(-10,-20){$\bullet$} \put(-20,-15){$z_2$}

 \put(-8,-18){\line(-1,2){10}} \put(-8,-18){\line(-1,-2){10}}
  \put(-6,-18){\line(4,3){28}}  \put(-6,-18){\line(4,-3){28}}
   \put(-8,-18){\line(-1,0){30}} \put(-40,-20){$\bullet$}

\put(40,-20){$\bullet$}

\put(-18,-65){Figure 7.}
\end{picture}
\begin{picture}(0,0)(-50,21)

\put(5,20){$\circ$} \put(44,20){$\blacktriangle$}
\put(5,-20){$\bullet$} \put(45,-20){$\bullet$} \put(25,-30){$\circ$}

\put(8,21){\line(0,-1){39}} \put(48,21){\line(0,-1){37}}
\put(8,-17){\line(1,1){38}} \put(48,-17){\line(-1,1){39}}
\put(29,-27){\line(2,1){18}} \put(26,-27){\line(-2,1){18}}
\put(28,-27){\line(2,5){19}} \put(48,23){\line(-1,0){39}}

\put(25,-50){$\star$} \put(25,-58){$z_1$}

\put(27,-48){\line(0,1){20}} \put(27,-48){\line(2,3){20}}
\put(27,-48){\line(-2,3){20}}

\put(27,-48){\line(1,0){40}} \put(67,-48){\line(0,1){70}}
\put(67,22){\line(-1,0){20}}

\put(27,-48){\line(-1,0){40}} \put(-13,-48){\line(0,1){70}}
\put(-13,22){\line(1,0){19}}

\put(0,-78){Figure 8.}
\end{picture}
\end{center}
$$$$\\\\\\\\\\\\\\
\textbf{Case C.} If the drawing $\phi(\langle E_{XY}\cup E_{XU}\cup
E_{YU}\rangle)$ contains a unique region, $f$, which contains more
than 4 vertices, then, by Lemma \ref{35}, the only possible drawings
of $\phi(\langle E_{XY}\cup E_{XU}\cup E_{YU}\rangle)$ are shown in
Figure 3. If $\phi(\langle E_{XY}\cup E_{XU}\cup E_{YU}\rangle)=D_1$
or $D_3$, then $cr_\phi(E)\geq 3$. If $\phi(\langle E_{XY}\cup
E_{XU}\cup E_{YU}\rangle)=D_2$, we may assume that there exists
$z_i$, say $z_1$, such that $\phi(z_i)$ lies in $f$, that is the
unbounded region of $D_2$ and $cr_\phi(E(z_i),E_{XY}\cup E_{XU}\cup
E_{YU})=0$ (otherwise, $cr_\phi(E)\geq 3$). Then $\phi(\langle
E(z_1)\cup E_{XY}\cup E_{XU}\cup E_{YU}\rangle)$ must be drawn as in
Figure 8.

However, no matter which region $z_2$ is placed, we have
$cr_\phi(E(z_2),E(z_1)\cup E_{XY}\cup E_{XU}\cup E_{YU})\geq 1$
which implies that $cr_\phi(E)\geq 3$. Therefore (\ref{26}) is true
for this case.

This shows that (\ref{26}) is true for $n=2$. Now consider $n\geq
3$. Suppose (\ref{26}) is true for all value less than $n$ and is
not true for $n$. Then there exists a good immersion $\phi$ of
$K_{1,2,2,n}$ such that
\begin{eqnarray}\label{27}
cr_{\phi}(E)<Z(5,n)+\lfloor\frac{3n}{2}\rfloor.
\end{eqnarray} Let $W=E_{XY}\cup E_{XU} \cup E_{YU}$.

Note that (\ref{5}) is also true for $\phi$. Since
$\langle\displaystyle\bigcup_{i=1}^n E(z_i)\rangle\cong K_{5,n}$, by
(\ref{21}), we have
\begin{eqnarray}\label{29}
cr_\phi(\bigcup_{i=1}^n E(z_i))\geq Z(5,n).
\end{eqnarray}
If $cr_\phi({W},E(z_i))\geq 2$ for all $i$, by (\ref{5}) and
(\ref{29}), we have $cr_\phi(E)\geq Z(5,n)+2n$ which contradict to
(\ref{27}). Therefore $cr_\phi({W},E(z_i))\leq 1$ for some
$i$. We will consider two cases:\\

\textbf{Case 1.} \emph{If $cr_\phi({W},E(z_i))=0$ for some $i$.}\\

By reordering, we may assume that $cr_\phi({W},E(z_1))=0.$
$\phi(\langle W\rangle)$ divides $\mathbb{R}^2$ into regions and the
condition $cr_\phi({W},E(z_1))=0$ implies that $\phi(X\cup Y\cup U)$
is contained in the boundary of one of the regions. Denote $F=W\cup
E(z_1)$. Then Figure 9 has shown all the possible drawings of $\phi|
\langle F\rangle$.

\begin{center}
\begin{picture}(0,0)(140,25)

\put(-20,0){$\circ$} \put(20,0){$\bullet$} \put(-20,-40){$\circ$}
\put(20,-40){$\bullet$}
 \put(18,-20){$\blacktriangle$} \put(59,-20){$\star$} \put(68,-20){$z_1$}

\put(22,2){\line(0,-1){40}} \put(-18,2){\line(1,0){40}}
\put(-18,2){\line(1,-1){40}} \put(22,2){\line(-1,-1){40}}
\put(22,-38){\line(-1,0){40}}

\put(22,-18){\line(-2,1){40}} \put(22,-18){\line(-2,-1){40}}
\put(22,-17){\line(1,0){39}}

\put(61,-17){\line(-2,1){37}} \put(61,-17){\line(-2,-1){37}}
\put(61,-17){\line(-1,-1){40}} \put(21,-57){\line(-1,0){38}}
\put(-17,-57){\line(0,1){20}} \put(61,-17){\line(-1,1){40}}
\put(21,23){\line(-1,0){38}} \put(-17,23){\line(0,-1){20}}

\put(50,0){$I$} \put(0,5){$II$} \put(0,-50){$III$}
\put(25,-15){$IV$} \put(28,-30){$V$}

 \put(0,-70){$D_1$}
\end{picture}
\begin{picture}(0,0)(25,25)
\put(-20,0){$\bullet$} \put(20,0){$\circ$} \put(-21,-40){$\circ$}
\put(20,-40){$\bullet$}
 \put(18,-20){$\blacktriangle$} \put(59,-20){$\star$} \put(68,-20){$z_1$}

\put(22,2){\line(0,-1){40}} \put(-18,2){\line(1,0){40}}
\put(-18,2){\line(0,-1){40}} \put(22,2){\line(-1,-2){10}}
\put(22,-38){\line(-1,2){10}} \put(22,-38){\line(-1,0){40}}

\put(22,-18){\line(-2,1){40}} \put(22,-18){\line(-2,-1){40}}
\put(22,-17){\line(1,0){39}}

\put(61,-17){\line(-2,1){37}} \put(61,-17){\line(-2,-1){37}}
\put(61,-17){\line(-1,-1){40}} \put(21,-57){\line(-1,0){38}}
\put(-17,-57){\line(0,1){20}} \put(61,-17){\line(-1,1){40}}
\put(21,23){\line(-1,0){38}} \put(-17,23){\line(0,-1){20}}

\put(50,0){$I$} \put(0,5){$II$} \put(0,-50){$III$}
\put(25,-15){$IV$} \put(28,-30){$V$}

 \put(0,-70){$D_2$}

 \put(-10,-95){Figure 9.}
\end{picture}
\begin{picture}(0,0)(-90,25)

\put(-20,0){$\bullet$} \put(20,0){$\bullet$}
\put(-20,-40){$\circ$} \put(20,-40){$\circ$}
 \put(18,-20){$\blacktriangle$} \put(59,-20){$\star$} \put(68,-20){$z_1$}

\put(22,2){\line(0,-1){40}} \put(-18,2){\line(1,-1){40}}
\put(22,2){\line(-1,-1){40}}

\put(-18,2){\line(0,-1){40}}

\put(22,2){\line(-1,-2){10}} \put(22,-38){\line(-1,2){10}}

\put(22,-18){\line(-2,1){40}} \put(22,-18){\line(-2,-1){40}}
\put(22,-17){\line(1,0){39}}

\put(61,-17){\line(-2,1){37}} \put(61,-17){\line(-2,-1){37}}
\put(61,-17){\line(-1,-1){40}} \put(21,-57){\line(-1,0){38}}
\put(-17,-57){\line(0,1){20}} \put(61,-17){\line(-1,1){40}}
\put(21,23){\line(-1,0){38}} \put(-17,23){\line(0,-1){20}}

\put(50,0){$I$} \put(0,5){$II$} \put(0,-50){$III$}
\put(25,-15){$IV$} \put(28,-30){$V$}

 \put(0,-70){$D_3$}
\end{picture}
\end{center}
$$$$\\\\\\\\\\\\\\\\

For each drawing $D_i$ ($1\leq i\leq 3$), if $\phi(z_j)$ ($2\leq
j\leq n$) is located in the region $I$ to $V$, we have
\begin{eqnarray}\label{30}
cr_\phi(F,E(z_j))\geq 4.
\end{eqnarray}
If $\phi(z_j)$ ($2\leq j\leq n$) is located in the region other than
$I$ to $V$, we have
\begin{eqnarray}\label{31}
cr_\phi(W,E(z_j))\geq 3.
\end{eqnarray}

Let $l$ be the number of $\phi(z_i)$ ($2\leq i\leq n$) such that it
is not located in the region $I$ to $V$. Note that $cr_\phi(W)\geq
2$ and combining (\ref{5}), (\ref{27}), (\ref{29}) and (\ref{31}),
we have
\begin{eqnarray}\label{32}
l\leq\lfloor\frac{n-2}{2}\rfloor.
\end{eqnarray}
Note that the number of $\phi(z_i)$ ($2\leq i\leq n$) such that it
is located in the region $I$ to $V$ is $n-l-1$. Note that (\ref{10})
is also true for $\phi$. Note also that
$\displaystyle\bigcup_{i=2}^n E(z_i)\cong K_{5,n-1}$, by (\ref{21}),
we have
\begin{eqnarray}\label{34}
cr_\phi(\bigcup_{i=2}^n E(z_i))\geq Z(5,n-1).
\end{eqnarray}
Since $cr_\phi(F)\geq 2$ in $D_i$ ($1\leq i\leq 3$),  by (\ref{10}),
(\ref{30}), (\ref{31}), (\ref{32}) and (\ref{34}), we have
$cr_\phi(E)\geq 2+Z(5,n-1)+4(n-l-1)+3l\geq
Z(5,n)+\displaystyle\lfloor\frac{3n}{2}\rfloor$ which contradicts to
(\ref{27}).\\

\textbf{Case 2.} \emph{If $cr_\phi({W},E(z_i))\geq 1$ for all $i$.}\\

Since $cr_\phi({W},E(z_i))\leq 1$ for some $i$,
$cr_\phi({W},E(z_i))=1$ for some $i$. Now by reordering, we may
assume that $cr_\phi({W},E(z_1))=1$. Then there is at least one
region in $\phi(\langle W\rangle)$ containing at least 4 vertices
in $\phi(X\cup Y\cup U)$. Then, by Lemma \ref{24}, $\phi(\langle
W\rangle)$ has a unique region whose boundary contains at least 4
vertices. We denote the unique region by $f$ and its boundary by
$\partial f$.

Suppose $\partial f$ contains more than 4 vertices in $\phi(X\cup
Y\cup U)$. Then by Lemma \ref{35}, there are 3 possible drawings of
$\phi(\langle W\rangle)$, which are shown in Figure 3 and $f$ is the
unbounded region for each $D_i$. Let $F=W\cup E(z_1)$. Under the
condition $cr_\phi({W},E(z_1))=1$, $\phi(z_1)$ must be drawn in $f$.
However, if $\phi(z_1)$ is drawn in $f$, $cr_\phi({W},E(z_1))\neq
1$.

Therefore, $\partial f$ must contain exactly 4 vertices in
$\phi(X\cup Y\cup U)$. For the $z_i$ such that
$cr_\phi(W,E(z_i))=1$, $\phi(z_i)$ must lie in $f$. Suppose the
vertices in $\phi(X\cup Y\cup U)$ contained in $\partial f$, in the
clockwise manner, are $a_1, a_2, a_3, a_4$ and $a_5$ is the vertex
of $\phi(X\cup Y\cup U)$ which is not contained in $f$. For
$cr_\phi({W},E(z_i))=1$, $\phi(z_i)$ is drawn in $f$. Since
$\phi(z_ia_5)$ cross $\partial f$, $\phi(z_ia_j)$, $1\leq j\leq 4$,
does not cross. See Figure 10 for the case $\phi(z_ia_5)$ cross the
boundary of $f$ between $a_1$ and $a_2$. Note that the boundary of
$f$ between $a_j$ and $a_{j+1}$ ($1\leq j\leq 4$ and mod 4 for
$j+1$) may not be the image of a single edge of $W$, but it must be
composed of some portions of the image of the edges of $W$ under
$\phi$.

\begin{center}
\begin{picture}(0,0)(70,5)
\put(20,0){$\bullet$} \put(25,5){$a_1$}
 \put(20,-40){$\bullet$} \put(25,-45){$a_2$}
 \put(-20,-40){$\bullet$} \put(-30,-45){$a_3$}
 \put(-20,0){$\bullet$} \put(-30,5){$a_4$}

\put(22,2){\line(0,-1){40}} \put(22,2){\line(-1,0){40}}
\put(22,-38){\line(-1,0){40}} \put(-18,-38){\line(0,1){40}}

\put(22,2){\line(-1,-1){20}} \put(-18,2){\line(1,-1){40}}
\put(-18,-38){\line(1,1){20}}

\put(2,-18){\line(1,0){40}} \put(0,-20){$\bullet$}
\put(10,-16){$z_i$}

\put(40,-20){$\bullet$} \put(48,-20){$a_5$}

\put(-18,-65){Figure 10.}
\end{picture}
\begin{picture}(0,0)(-70,5)
\put(20,0){$\bullet$} \put(25,5){$a_1$}
 \put(20,-40){$\bullet$} \put(25,-45){$a_2$}
 \put(-20,-40){$\bullet$} \put(-30,-45){$a_3$}
 \put(-20,0){$\bullet$} \put(-30,5){$a_4$}

\put(22,2){\line(0,-1){40}} \put(22,2){\line(-1,0){40}}
\put(22,-38){\line(-1,0){40}} \put(-18,-38){\line(0,1){40}}

\put(22,2){\line(-1,-1){20}} \put(-18,2){\line(1,-1){40}}
\put(-18,-38){\line(1,1){20}}

\put(2,-18){\line(1,0){40}} \put(0,-20){$\bullet$}
\put(8,-16){$z_1$}

\put(-10,-20){$\bullet$}

 \put(-8,-18){\line(-1,2){10}} \put(-8,-18){\line(-1,-2){10}}
  \put(-6,-18){\line(4,3){28}}  \put(-6,-18){\line(4,-3){28}}
   \put(-8,-18){\line(-1,0){30}} \put(-40,-20){$\bullet$}
   \put(-50,-20){$a_5$}

\put(40,-20){$\bullet$} \put(48,-20){$a_5$}

\put(-18,-70){Figure 11.}
\end{picture}
\end{center}
$$$$\\\\\\\\

Let $l$ be the number of $z_i$ such that $cr_\phi({W},E(z_i))=1$.
Also, let $l_j$ ($1\leq j\leq 4$) be the number of $z_i$ such that
$cr_\phi({W},E(z_i))=1$ and $\phi(z_ia_5)$ crosses the boundary of
$f$ between $a_j$ and $a_{j+1}$ (mod 4 for $j+1$) where $1\leq j\leq
4$. Then we have $\displaystyle\sum_{j=1}^4l_j=l$. We may assume
$l_1+l_3\geq l_2+l_4$. Also we may assume $l_1\geq l_3$.

Now by reordering, we may assume that $cr_\phi({W},E(z_1))=1$ such
that $\phi(z_1a_5)$ crosses the boundary of $f$ between $a_1$ and
$a_{2}$. Let $F=W\cup E(z_1)$, then $\phi|F$ must be drawn as in
Figure 10. For $z_i\neq z_1$ such that $cr_\phi({W},E(z_i))=1$, if
$\phi(z_ia_5)$ crosses the boundary between $a_1$ and $a_2$, then
$cr_\phi(F,E(z_i))\geq 5$; if $\phi(z_ia_5)$ crosses the boundary
between $a_2$ and $a_3$ or the boundary between $a_4$ and $a_1$, we
have $cr_\phi(F,E(z_i))\geq 4$; if $\phi(z_ia_5)$ crosses the
boundary between $a_3$ and $a_4$ (see Figure 11), we have
$cr_\phi(F,E(z_i))\geq 3$.

Since $l$ is the number of $z_i$ where $1\leq i\leq n$ such that
$cr_\phi(W,E(z_i))=1$, the number of $z_i$ where $1\leq i\leq n$
such that $cr_\phi(W,E(z_i))\geq 2$ is $n-l$. From (\ref{5}),
(\ref{27}) and (\ref{29}), we have
\begin{eqnarray}\label{36}
l\geq\lceil\frac{n}{2}\rceil+1.
\end{eqnarray}

We will consider two subcases:

\textbf{Case A. }Suppose for all $z_i$ ($2\leq i\leq n$) such that
$cr_\phi(W,E(z_i))=2$ we have
\begin{eqnarray}\label{37}
cr_\phi(F,E(z_i))\geq 3.
\end{eqnarray}
Note that (\ref{10}) is also true for $\phi$. Then by (\ref{10}),
(\ref{34}), (\ref{37}), the fact that $cr_\phi(F)\geq 1$ and our
previous discussion, we have
\begin{eqnarray*}
\begin{array}{rcl}
cr_\phi(E)&\geq& 1+Z(5,n-1)\\
&&+\displaystyle\sum_{cr_\phi(W,E(z_i))=1}cr_\phi(F,E(z_i))
+\displaystyle\sum_{cr_\phi(W,E(z_i))\geq 2}cr_\phi(F,E(z_i))\\
&\geq& 1+Z(5,n-1)+3(n-l)+5(l_1-1)+4(l_2+l_4)+3l_3\\
&\geq& Z(5,n-1)+3n+l-4\\
&\geq& Z(5,n)+\displaystyle\lfloor\frac{3n}{2}\rfloor
\end{array}
\end{eqnarray*}
where the third inequality follows from the fact that $l_1\geq l_3$
and $\displaystyle\sum_{j=1}^4l_j=l$ and the last inequality follows
from (\ref{36}). However, $cr_\phi(E)\geq
Z(5,n)+\displaystyle\lfloor\frac{3n}{2}\rfloor$ contradicts to
(\ref{27}).

\textbf{Case B.} Suppose there exists $z_i$ ($2\leq i\leq n$) such
that $cr_\phi(F,E(z_i))=cr_\phi(W,E(z_i))=2$. We may assume
$cr_\phi(F,E(z_2))=cr_\phi(W,E(z_2))=2$ which implies that
\begin{eqnarray}\label{106}
cr_\phi(E(z_1),E(z_2))=0.
\end{eqnarray}
For the vertices $z_i, z_j, z_k\in Z$, $\langle E(z_i)\cup
E(z_j)\cup E(z_k)\rangle$ is homeomorphic to $K_{5,3}$. Hence by
(\ref{3}) and (\ref{4}), we have
\begin{eqnarray}\label{107}
cr_\phi(E(z_i)\cup E(z_j),E(z_k))+cr_\phi(E(z_i),E(z_j))\geq 4.
\end{eqnarray}
Therefore, by using (\ref{106}) and (\ref{107}), we have
\begin{eqnarray}\label{108}
cr_\phi(E(z_1)\cup E(z_2),E(z_k))\geq 4.
\end{eqnarray}
Let $E'=E-(E(z_1)\cup E(z_2)$. Then $\langle
E'\rangle=K_{1,2,2,n-2}$ and
\begin{eqnarray}\label{110}
\begin{array}{rcl}
cr_\phi(E)&=&cr_\phi(E')+cr_\phi(E(z_1)\cup
E(z_2))\\
&&+cr_\phi(W,E(z_1)\cup E(z_2))+\displaystyle\sum_{i=3}^n
cr_\phi(E(z_1)\cup E(z_2),E(z_i)).
\end{array}
\end{eqnarray}
On the other hand, if $cr_\phi(E(z_1),E(z_2))=0$, we have
\begin{eqnarray}\label{111}
cr_\phi(W,E(z_1)\cup E(z_2))\geq 3.
\end{eqnarray}
To see this, note that under the condition
$cr_\phi(E(z_1),E(z_2))=0$, $\phi(\langle E(z_1)\cup E(z_2)\rangle)$
must be drawn as in Figure 12 where $\blacktriangle$ is the vertex
in $X$, $\bullet$ are the vertices in $Y$ and $\circ$ are the
vertices in $U$.

\begin{center}
\begin{picture}(0,0)(100,5)
\put(0,0){$\star$} \put(0,-60){$\star$}

\put(7,3){$z_1$} \put(7,-63){$z_2$}

 \put(10,-30){$\bullet$} \put(-10,-30){$\bullet$}
 \put(30,-30){$\circ$} \put(-30,-30){$\circ$}
 \put(-50,-30){$\blacktriangle$}

\put(2,2){\line(1,-3){10}} \put(2,2){\line(-1,-3){10}}
\put(2,2){\line(1,-1){30}} \put(2,2){\line(-1,-1){30}}
 \put(2,2){\line(-5,-3){50}}

\put(2,-58){\line(1,3){10}} \put(2,-58){\line(-1,3){10}}
\put(2,-58){\line(1,1){30}} \put(2,-58){\line(-1,1){30}}
\put(2,-58){\line(-5,3){50}}

\put(0,-75){$F_1$}
\end{picture}
\begin{picture}(0,0)(-0,5)
\put(0,0){$\star$} \put(0,-60){$\star$}

\put(7,3){$z_1$} \put(7,-63){$z_2$}

 \put(10,-30){$\bullet$} \put(-10,-30){$\circ$}
 \put(30,-30){$\circ$} \put(-30,-30){$\bullet$}
 \put(-50,-30){$\blacktriangle$}

\put(2,2){\line(1,-3){10}} \put(2,2){\line(-1,-3){10}}
\put(2,2){\line(1,-1){30}} \put(2,2){\line(-1,-1){30}}
 \put(2,2){\line(-5,-3){50}}

\put(2,-58){\line(1,3){10}} \put(2,-58){\line(-1,3){10}}
\put(2,-58){\line(1,1){30}} \put(2,-58){\line(-1,1){30}}
 \put(2,-58){\line(-5,3){50}}

\put(0,-75){$F_2$}

\put(-15,-90){Figure 12.}
\end{picture}
\begin{picture}(0,0)(-100,5)
\put(0,0){$\star$} \put(0,-60){$\star$}

\put(7,3){$z_1$} \put(7,-63){$z_2$}

 \put(10,-30){$\circ$} \put(-10,-30){$\bullet$}
 \put(30,-30){$\circ$} \put(-30,-30){$\bullet$}
 \put(-50,-30){$\blacktriangle$}

\put(2,2){\line(1,-3){10}} \put(2,2){\line(-1,-3){10}}
\put(2,2){\line(1,-1){30}} \put(2,2){\line(-1,-1){30}}
 \put(2,2){\line(-5,-3){50}}

\put(2,-58){\line(1,3){10}} \put(2,-58){\line(-1,3){10}}
\put(2,-58){\line(1,1){30}} \put(2,-58){\line(-1,1){30}}
 \put(2,-58){\line(-5,3){50}}

\put(0,-75){$F_3$}

\end{picture}
\end{center}
$$$$\\\\\\\\\\\\

Naming the vertices of $\phi(\langle E(z_1)\cup E(z_2)\rangle)$ in
Figure 12 from left to right $a_1$, $a_2,$..., $a_5$. In $F_1$,
$a_1a_3$, $a_1a_4$, $a_2a_4$, $a_3a_5$ have to cross with the edges
in $W$ at least once. Therefore, $cr_\phi(W,E(z_1)\cup E(z_2))\geq
4$. In $F_2$, $a_1a_3$, $a_1a_4$, $a_2a_5$ have to cross with the
edges in $W$ at least once. Therefore, $cr_\phi(W,E(z_1)\cup
E(z_2))\geq 3$. In $F_3$, $a_1a_3$, $a_1a_4$, $a_2a_4$, $a_2a_5$,
$a_3a_5$ have to cross with the edges in $W$ at least once.
Therefore, $cr_\phi(W,E(z_1)\cup E(z_2))\geq 3$. This proves
(\ref{111}).

Then, by the induction assumption and (\ref{108}), (\ref{110}),
(\ref{111}), $ cr_\phi(E)\geq
Z(5,n-2)+\displaystyle\lfloor\frac{3(n-2)}{2}\rfloor+3+4(n-2)\geq
Z(5,n)+\lfloor\frac{3n}{2}\rfloor. $ which contradicts to
(\ref{27}).
\end{proof}

\section{Crossing number of $K_{1,1,1,2,n}$}

In this section, the crossing number of $K_{1,1,1,2,n}$ is found.
More precisely, we have

\begin{thm}\label{Z1}
The crossing number of the complete 5-partite graph
$K_{1,1,1,2,n}$ is given by
$$cr(K_{1,1,1,2,n})=Z(5,n)+2n.$$
\end{thm}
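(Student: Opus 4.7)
The plan is to adapt the proof template of Theorem \ref{1} to the present setting. Let the partition of $K_{1,1,1,2,n}$ be $(X,Y,S,U,Z)$ with $X=\{x_1\}$, $Y=\{y_1\}$, $S=\{s_1\}$, $U=\{u_1,u_2\}$ and $Z=\bigcup_{i=1}^n\{z_i\}$, and set
$$W=E_{XY}\cup E_{XS}\cup E_{XU}\cup E_{YS}\cup E_{YU}\cup E_{SU},$$
so that $\langle W\rangle\cong K_{1,1,1,2}\cong K_5-e$. For the upper bound $cr(K_{1,1,1,2,n})\le Z(5,n)+2n$, I would exhibit an explicit drawing along the lines of Figure 6, arranging the five vertices of $X\cup Y\cup S\cup U$ on a vertical axis and routing the nine edges of $\langle W\rangle$ so that each $z_i$ placed in the optimal Zarankiewicz position for $K_{5,n}$ contributes exactly two extra crossings.

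For the lower bound I would proceed by induction on $n$. The base case $n=1$ amounts to $cr(K_{1,1,1,2,1})\ge 2$; since $K_{1,1,1,2,1}\cong K_6-e$ this follows from the known value $cr(K_6-e)=2$, or from a short direct verification. For the inductive step, assume a good immersion $\phi$ violates the theorem, i.e., $cr_\phi(E)<Z(5,n)+2n$. Using (\ref{3}) and (\ref{4}) as in (\ref{5}), together with Kleitman's bound $cr_\phi(\bigcup_i E(z_i))\ge Z(5,n)$ from (\ref{21}) applied to $K_{5,n}$, one obtains $cr_\phi(W)+\sum_i cr_\phi(W,E(z_i))<2n$, so after reordering $cr_\phi(W,E(z_1))\le 1$.

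Let $F=W\cup E(z_1)$, so $\langle F\rangle\cong K_6-e$ and hence $cr_\phi(F)\ge 2$. The heart of the proof is the structural claim
$$cr_\phi(F,E(z_j))\ge 4\qquad\text{for every }j\ge 2.$$
Granting this claim, formula (\ref{10}) combined with Kleitman applied to $K_{5,n-1}$ gives $cr_\phi(E)\ge 2+Z(5,n-1)+4(n-1)$, and the identity $Z(5,n)-Z(5,n-1)=4\lfloor(n-1)/2\rfloor$ shows that the right-hand side is at least $Z(5,n)+2n$, contradicting the assumption.

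The main obstacle is the structural claim. I would enumerate, up to isomorphism, the drawings of $\langle W\rangle$ admitting a face whose boundary carries four or five vertices of $X\cup Y\cup S\cup U$; the maximal planarity of $K_5-e$ (nine edges on five vertices) implies that every face of any planar drawing of $\langle W\rangle$ is a triangle, so $cr_\phi(W,E(z_1))=0$ forces $cr_\phi(W)\ge 1$ and a short list of drawings arises. In each resulting drawing of $F$ one then reads off the regions and verifies that the five edges of $E(z_j)$ must cross $\phi(F)$ at least four times, much as in the $K_{1,1,1,1,n}$ argument but with the heavier edge load of $K_5-e$. The analysis is tightest in the subcase $cr_\phi(W,E(z_1))=1$ and in the parity-tight case $n$ odd, where one either improves $cr_\phi(F)\ge 2$ to $cr_\phi(F)\ge 3$ in the relevant drawings, or refines the crossing estimate to distinguish the regions available to $\phi(z_j)$, in order to close the gap.
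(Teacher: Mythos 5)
There is a genuine gap, and it sits exactly at the place you flag as ``the main obstacle.'' Your structural claim $cr_\phi(F,E(z_j))\ge 4$ for all $j\ge 2$ is \emph{false} as a region-by-region statement in the subcase $cr_\phi(W,E(z_1))=1$: among the drawings of $F=W\cup E(z_1)$ that arise there (the paper's $F_2$, $F_3$, $F_4$ in Figure 22), there is a region in which placing $z_j$ forces only $cr_\phi(F,E(z_j))\ge 3$. Your two proposed repairs do not close this. Improving $cr_\phi(F)$ from $2$ to $3$ gains a single crossing, while the deficit is one crossing per $z_j$ placed in the bad region, and nothing in your counting prevents many of the $z_j$ from sitting there (their mutual crossings are already absorbed into the Kleitman bound $Z(5,n-1)$, so they buy you nothing extra). ``Refining the crossing estimate'' region by region cannot work either, because a drawing genuinely achieving $3$ for one such $z_j$ exists locally. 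The paper's resolution is a different mechanism: if some $z_j$ achieves $cr_\phi(F,E(z_j))=3$ then necessarily $cr_\phi(E(z_1),E(z_j))=0$ and $cr_\phi(W,E(z_j))=3$; the $K_{5,3}$ counting inequality $cr_\phi(E(z_1)\cup E(z_j),E(z_k))+cr_\phi(E(z_1),E(z_j))\ge 4$ then gives $cr_\phi(E(z_1)\cup E(z_j),E(z_k))\ge 4$ for every other $k$, and deleting \emph{both} $z_1$ and $z_j$ and applying the induction hypothesis to the resulting $K_{1,1,1,2,n-2}$ yields the contradiction. No purely local analysis of $\phi(F)$ substitutes for this step.

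A second, consequential omission: because the paper's argument reduces from $n$ to $n-2$, the induction needs $n=2$ as a base case, and that case is not a triviality --- the paper devotes a full case analysis to it, applying Lemmas \ref{24} and \ref{35} to the $K_{1,2,2}$ subgraph spanned by $S\cup T\cup Z$ (not to $\langle W\rangle$). Your outline handles only $n=1$ and reduces only to $n-1$, so even if your structural claim were repaired along the paper's lines, the induction as you have set it up would be incomplete. The remaining ingredients of your sketch (the decomposition via $W$, the observation that $K_5-e$ is maximal planar so $cr_\phi(W,E(z_1))\le 1$ forces $cr_\phi(W)\ge 1$, the enumeration of drawings of $K_5-e$ with a large face, and the final arithmetic) do match the paper's Case 1 and the easy part of Case 2.
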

\begin{proof}
Let $(X,Y,S,T,Z)$ be partition of $K_{1,1,1,2,n}$ such that $X=\{
x_1\}$, $Y=\{ y_1\}$, $S=\{ s_1\}$, $T=\{t_1, t_2\}$ and
$Z=\displaystyle\bigcup_{i=1}^n\{z_i\}$. To show that
$cr(K_{1,1,1,2,n})\leq Z(5,n)+2n,$ see Figure 13 for $n=4$, and it
can be generalized to all $n$.

\begin{center}
\begin{picture}(60,20)(-30,35)
\put(0,0){$\bullet$} \put(1,40){$\bullet$}
\put(-11,-40){$\bullet$} \put(0,-81){$\bullet$}
\put(0,-121){$\bullet$}

\put(-7,7){$t_1$} \put(-13,41){$y_1$} \put(-4,-45){$x_1$}
\put(-12,-81){$t_2$} \put(10,-121){$s_1$}

 \put(3,42){\line(1,-6){10}} \put(13,-17){\line(-1,-1){20}}

 \put(2,2){\line(-1,-4){10}}
\put(-8,-38){\line(1,-4){10}} \put(3,2){\line(0,1){40}}

\put(20,-20){$\bullet$} \put(60,-22){$\bullet$}
\put(-40,-20){$\bullet$} \put(-80,-21){$\bullet$}

\put(28,-18){$z_1$} \put(-49,-20){$z_3$} \put(67,-20){$z_2$}
\put(-89,-21){$z_4$}

\put(22,-18){\line(-1,1){20}} \put(24,-18){\line(-1,3){20}}
\put(24,-17){\line(-3,-2){32}} \put(24,-18){\line(-1,-3){20}}

\put(62,-18){\line(-3,1){60}} \put(64,-18){\line(-1,1){60}}
 \put(64,-20){\line(-4,-1){71}} \put(64,-18){\line(-1,-1){60}}

\put(-38,-18){\line(2,1){40}} \put(-38,-18){\line(2,3){40}}
\put(-38,-18){\line(3,-2){28}} \put(-38,-18){\line(2,-3){40}}

\put(-78,-18){\line(4,1){80}} \put(-78,-17){\line(4,3){80}}
\put(-78,-20){\line(4,-1){70}} \put(-78,-20){\line(4,-3){80}}

\put(95,-18){\line(-3,2){90}} \put(95,-18){\line(-3,-2){90}}

\put(4,-119){\line(1,5){20}} \put(3,-119){\line(0,1){40}}
\put(4,-119){\line(3,5){60}} \put(2,-119){\line(-2,5){40}}
\put(2,-119){\line(-2,5){40}} \put(1,-119){\line(-4,5){80}}

\put(-122,-17){\line(6,1){123}} \put(-122,-17){\line(6,-5){123}}

 \put(-10,-38){\line(1,-6){13}}

 \put(4,-119){\line(3,2){153}} \put(157,-17){\line(-5,2){153}}

 \put(-15,-147){Figure 13.}
\end{picture}
\end{center}
$$$$\\\\\\\\\\\\\\\\\\\\\\\\\\

Therefore it is sufficient to prove
that
\begin{eqnarray}\label{Z2}
cr(K_{1,1,1,2,n})\geq Z(5,n)+2n.
\end{eqnarray}
We will prove (\ref{Z2}) by induction on $n$. For $n=1$, note that
$\langle x_1\cup y_1\cup s_1\cup t_1\cup z_1 \rangle$ is isometric
to $K_5$ and it is clear that $K_{5}$ is nonplanar, therefore
$cr(K_{1,1,1,2,1})\geq 1$. Suppose that $cr(K_{1,1,1,2,1})<2$, then
$cr(K_{1,1,1,2,1})=1$. Let $\phi$ be an immersion of $K_{1,1,1,2,1}$
such that its crossing number is 1. Therefore $cr_\phi(\langle
x_1\cup y_1\cup s_1\cup t_1\cup z_1 \rangle)=1$. It is well-known
that there exists a unique drawing of $K_5$ such that its crossing
number is 1, namely, the drawing in Figure 14.

However, under the condition that $cr(K_{1,1,1,2,1})=1$, we must
have $cr_\phi(E(t_2))\leq 1$. This implies that there is a face in
the drawing of $K_5$ such that there is a face containing at least 4
vertices on its boundary, which is impossible. Therefore, (\ref{Z2})
is true for $n=1$.

\begin{center}
\begin{picture}(0,0)(127,10)
\put(0,0){$\bullet$} \put(40,0){$\bullet$} \put(40,-40){$\bullet$}
\put(0,-40){$\bullet$}

\put(2,3){\line(1,0){40}} \put(2,3){\line(1,-1){40}}
\put(2,3){\line(0,-1){40}}

\put(42,-37){\line(-1,0){40}} \put(42,-37){\line(0,1){40}}

\put(2,-37){\line(-3,1){45}} \put(42,3){\line(-3,1){45}}
\put(-3,18){\line(-1,-1){40}}

\put(10,-30){$\bullet$}

\put(12,-28){\line(-1,-1){10}} \put(12,-28){\line(1,1){30}}
\put(13,-28){\line(-1,3){10}} \put(13,-27){\line(3,-1){30}}

 \put(-1,-70){Figure 14.}
\end{picture}
\begin{picture}(0,0)(60,30)
\put(0,20){$\circ$} \put(40,20){$\bullet$} \put(0,-20){$\bullet$}
\put(40,-20){$\circ$} \put(20,0){$\blacktriangle$}

\put(3,21){\line(0,-1){39}} \put(4,23){\line(1,0){39}}
\put(43,21){\line(0,-1){37}} \put(4,-17){\line(1,0){37}}
\put(3,-17){\line(1,1){19}} \put(42,21){\line(-1,-1){18}}

\put(23,2){\line(1,-1){18}} \put(22,3){\line(-1,1){18}}

\put(0,-50){Figure 15.}
\end{picture}
\begin{picture}(0,0)(-25,30)
\put(0,20){$\circ$} \put(40,20){$\bullet$} \put(0,-20){$\bullet$}
\put(40,-20){$\circ$} \put(20,0){$\blacktriangle$}

\put(3,21){\line(0,-1){39}} \put(4,23){\line(1,0){39}}
\put(43,21){\line(0,-1){37}} \put(4,-17){\line(1,0){37}}
\put(3,-17){\line(1,1){19}} \put(42,21){\line(-1,-1){18}}

\put(23,2){\line(1,-1){18}} \put(22,3){\line(-1,1){18}}

\put(-20,0){$\star$}

\put(-18,2){\line(1,0){40}} \put(-18,2){\line(1,1){20}}
\put(-18,2){\line(1,-1){20}} \put(-18,2){\line(0,1){30}}
\put(-18,32){\line(1,0){60}} \put(42,32){\line(0,-1){10}}
\put(-18,2){\line(0,-1){30}} \put(-18,-28){\line(1,0){60}}
\put(42,-28){\line(0,1){10}}

\put(-10,-50){Figure 16.}
\end{picture}
\begin{picture}(0,0)(-110,10)
\put(20,0){$\bullet$} \put(25,5){$a_1$}
 \put(20,-40){$\bullet$} \put(25,-45){$a_2$}
 \put(-20,-40){$\bullet$} \put(-30,-45){$a_3$}
 \put(-20,0){$\bullet$} \put(-30,5){$a_4$}

\put(22,2){\line(0,-1){40}} \put(22,2){\line(-1,0){40}}
\put(22,-38){\line(-1,0){40}} \put(-18,-38){\line(0,1){40}}

\put(22,2){\line(-1,-1){20}} \put(-18,2){\line(1,-1){40}}
\put(-18,-38){\line(1,1){20}}

\put(2,-18){\line(1,0){40}} \put(0,-20){$\bullet$}
\put(10,-16){$x_1$}

\put(40,-20){$\bullet$} \put(48,-20){$a_5$}

\put(-18,-69){Figure 17.}
\end{picture}
\end{center}
$$$$\\\\\\\\

For $n=2$, suppose that (\ref{Z2}) is not true, that is,
$cr(K_{1,1,1,2,2})<4$. Then there exists a good immersion $\phi$ of
$K_{1,1,1,2,2}$ such that $cr_\phi(E)<4.$ Let $Q=E_{ST}\cup
E_{SZ}\cup E_{TZ}$. Since $\langle Q\rangle\cong K_{1,2,2}$, by
Lemma \ref{24}, $\phi(\langle Q\rangle)$ has at most one region
whose boundary contains at least 4 vertices in $\phi(S\cup T\cup
Z)$. In $\phi(\langle Q\rangle)$, if all the regions whose boundary
contains less than 4 vertices in $\phi(S\cup T\cup Z)$, then we have
$cr_\phi(E(x_1),Q)\geq 2$ and $cr_\phi(E(y_1),Q)\geq 2$ which
implies that $cr_\phi(E)\geq 4$. Therefore $\phi(\langle Q\rangle)$
has a unique region whose boundary contains at least 4 vertices in
$\phi(S\cup T\cup Z)$. Denote the unique face by $f$ and it boundary
by $\partial f$. We will consider three cases:

\textbf{Case A.} If $cr_\phi(Q)=0$, since $K_{1,2,2}$ is
3-connected, it has a unique embedding in the plane, namely, the
drawing in Figure 15.

 If $\phi(x_1)$ does not lie in $f$, then
$cr_\phi(E(x_1),Q)\geq 2$. If $\phi(x_1)$ lie in $f$, then
$cr_\phi(E(x_1),Q)\geq 1$. Since $cr_\phi(E)<4$,
$cr_\phi(E(x_1),Q)=1$ or $cr_\phi(E(y_1),Q)=1$. Without loss of
generality, we may assume that $cr_\phi(E(x_1),Q)=1$ which implies
that $\phi(x_1)$ lies in $f$. Denote $K=E(x_1)\cup Q$, then the only
possible drawing of $\phi(\langle K\rangle)$ is shown in Figure 16.
However, no matter which region $\phi(y_1)$ is contained in will
result in $cr_\phi(E)\geq 4$.

\textbf{Case B.} Suppose that $cr_\phi(Q)\geq 1$ and $\partial f$
contains more than 4 vertices in $\phi(S\cup T\cup Z)$. By Lemma
\ref{35}, the possible drawings of $\phi(\langle Q\rangle)$ are
shown in Figure 3. Note that $f$ is just the unbounded region.

If $\phi(x_1)$ does not lie in $f$, then $cr_\phi(E(x_1),Q)\geq 3$.
Since $cr_\phi(E)<4$, $cr_\phi(E(x_1), Q)\leq 1$ or
$cr_\phi(E(y_1),Q)\leq 1$. Without loss of generality, we may assume
that $cr_\phi(E(x_1),Q)\leq 1$ which implies that $\phi(x_1)$ lies
in $f$. If $\phi(x_1)$ lies in $f$, under the condition
$cr_\phi(E(x_1),Q)\leq 1$, we have $cr_\phi(E(x_1),Q)=0$. Denote
$K=E(x_1)\cup Q$, then the possible drawings of $\phi(\langle
K\rangle)$ are shown in Figure 9. However, no matter which region
$\phi(y_1)$ lies will result in $cr_\phi(E)\geq 4$, contradicted to
$cr_\phi(E)<4$.

\textbf{Case C.} Suppose that $cr_\phi(Q)\geq 1$ and $\partial f$
contains exactly 4 vertices in $\phi(S\cup T\cup Z)$. Since
$cr_\phi(E)\leq 3$, then $cr_\phi(E(x_1), Q)\leq 1$ or
$cr_\phi(E(y_1), Q)\leq 1$ which implies that $\phi(x_1)$ or
$\phi(y_1)$ lies in $f$. We may assume that $\phi(x_1)$ lies in
$f$. Note that if $\phi(x_1)$ lies in $f$, then $cr_\phi(E(x_1),
Q)\geq 1$. Therefore $cr_\phi(E(x_1), Q)=1$. On the other hand, if
$\phi(y_1)$ does not lie in $f$, then $cr_\phi(E(y_1), Q)\geq 2$
which implies $cr_\phi(E)\geq 4$. Thus, both $\phi(x_1)$ and
$\phi(y_1)$ lie in $f$.

Suppose the vertices in $\phi(S\cup T\cup Z)$ contained in $\partial
f$, in the clockwise manner, are $a_1, a_2, a_3, a_4$ and $a_5$ is
the vertex of $\phi(S\cup T\cup Z)$ which is not contained in $f$.
Since $\phi(x_1a_5)$ must cross $\partial f$, under the condition
$cr_\phi(E(x_1), Q)=1$, $\phi(x_1a_j)$ where $1\leq j\leq 4$, does
not cross the edges in $Q$. See Figure 17 for the case
$\phi(x_1a_5)$ cross the boundary of $f$ between $a_1$ and $a_2$.
Note that the boundary of $f$ between $a_j$ and $a_{j+1}$ ($1\leq
j\leq 4$ and mod 4 for $j+1$) may not be the image of a single edge
of $Q$, but it must be composed of some portions of the image of the
edges of $Q$ under $\phi$.

Now by reordering, we may suppose $\phi(x_1a_5)$ cross the boundary
of $f$ between $a_1$ and $a_2$. However, if $\phi(y_1)$ lies in $f$,
$cr_\phi(E(x_1)\cup Q, E(y_1))\geq 2$. This gives $cr_\phi(E)\geq
4$, contradicted to $cr_\phi(E)<4$.

This shows that (\ref{Z2}) is true for $n=2$. Now we can assume that
$n\geq 3$. Suppose (\ref{Z2}) is true for all value less than $n$
and is not true for $n$. Then there exists a good immersion $\phi$
of $K_{1,1,1,2,n}$ such that
\begin{eqnarray}\label{Z3}
cr_{\phi}(E)<Z(5,n)+2n.
\end{eqnarray} Let $W=E_{XY}\cup E_{XS}\cup E_{XT} \cup E_{YS}\cup E_{YT}\cup E_{ST}$.
Note that (\ref{5}) and (\ref{29}) is also true for $\phi$.

If $cr_\phi({W},E(z_i))\geq 2$ for all $i$, by (\ref{5}) and
(\ref{29}), we have $cr_\phi(E)\geq Z(5,n)+2n$ which contradict to
(\ref{Z3}). Therefore $cr_\phi({W},E(z_i))\leq 1$ for some $i$. We
will consider the following two cases:
\begin{description}
  \item[Case 1.] $cr_\phi({W},E(z_i))=0$ for some $i$;
  \item[Case 2.] $cr_\phi({W},E(z_i))\geq 1$ for all $i$.
\end{description}
\textbf{Case 1.} By reordering, we may assume that
$cr_\phi({W},E(z_1))=0.$ $\phi(W)$ divides $\mathbb{R}^2$ into
regions and the condition $cr_\phi({W},E(z_1))=0$ implies that
$\phi(X\cup Y\cup S\cup T)$ is contained in the boundary of one of
the regions. Figure 18 shows all the possible drawings of $\phi(W)$.

\begin{center}
\begin{picture}(0,0)(70,10)
\put(0,0){$\bullet$} \put(41,0){$\circ$} \put(40,-40){$\bullet$}
\put(0,-40){$\circ$} \put(-19,-20){$\bullet$}

\put(2,3){\line(1,0){40}} \put(2,3){\line(1,-1){40}}
\put(2,3){\line(0,-1){39}}

\put(42,-37){\line(-1,0){38}} \put(42,-37){\line(0,1){40}}

\put(2,-36){\line(-1,1){18}}

\put(-16,-16){\line(1,1){20}} \put(-16,-17){\line(3,1){58}}
\put(-16,-17){\line(3,-1){60}}

\put(10,-60){$D_1$}
\end{picture}
\begin{picture}(0,0)(-40,10)
\put(0,0){$\bullet$} \put(40,0){$\bullet$} \put(40,-40){$\bullet$}
\put(0,-40){$\circ$} \put(-20,-20){$\circ$}

\put(2,3){\line(1,0){40}} \put(2,3){\line(1,-1){40}}
\put(2,3){\line(0,-1){39}}

\put(42,-37){\line(-1,0){38}} \put(42,-37){\line(0,1){40}}

\put(4,-36){\line(1,1){38}}

\put(-16,-16){\line(1,1){20}} \put(-16,-17){\line(3,1){60}}
\put(-16,-17){\line(3,-1){60}}

\put(10,-60){$D_2$}

\put(-70,-75){Figure 18.}
\end{picture}
\end{center}
$$$$\\\\\\\\\\\\

Denote $F=W\cup E(z_1)$. Then Figure 19 has shown all the possible
drawings of $F$. Therefore, for each drawing $D_i$ ($1\leq i\leq 2$)
of $F$, no matter which regions of $z_j$ ($2\leq j\leq n$) is
located, we have
\begin{eqnarray}\label{Z6}
cr_\phi(F,E(z_j))\geq 4.
\end{eqnarray}

\begin{center}
\begin{picture}(0,0)(70,22)
\put(0,0){$\bullet$} \put(41,0){$\circ$} \put(40,-40){$\bullet$}
\put(0,-40){$\circ$} \put(-19,-20){$\bullet$}

\put(2,3){\line(1,0){40}} \put(2,3){\line(1,-1){40}}
\put(2,3){\line(0,-1){39}}

\put(42,-37){\line(-1,0){38}} \put(42,-37){\line(0,1){40}}

\put(2,-36){\line(-1,1){18}}

\put(-16,-16){\line(1,1){20}} \put(-16,-17){\line(3,1){58}}
\put(-16,-17){\line(3,-1){60}}

\put(-40,-20){$\star$}

\put(-38,-18){\line(1,0){20}} \put(-38,-18){\line(2,1){40}}
\put(-38,-18){\line(2,-1){39}}  \put(-38,-18){\line(1,1){40}}
\put(-38,-18){\line(1,-1){40}} \put(2,22){\line(1,0){40}}
\put(2,-58){\line(1,0){40}} \put(42,22){\line(0,-1){20}}
\put(42,-58){\line(0,1){20}}

\put(10,-70){$D_1$}
\end{picture}
\begin{picture}(0,0)(-40,22)
\put(0,0){$\bullet$} \put(40,0){$\bullet$} \put(40,-40){$\bullet$}
\put(0,-40){$\circ$} \put(-20,-20){$\circ$}

\put(2,3){\line(1,0){40}} \put(2,3){\line(1,-1){40}}
\put(2,3){\line(0,-1){39}}

\put(42,-37){\line(-1,0){38}} \put(42,-37){\line(0,1){40}}

\put(4,-36){\line(1,1){38}}

\put(-16,-16){\line(1,1){20}} \put(-16,-17){\line(3,1){60}}
\put(-16,-17){\line(3,-1){60}}

\put(-40,-20){$\star$}

\put(-38,-18){\line(1,0){19}} \put(-38,-18){\line(2,1){40}}
\put(-38,-18){\line(2,-1){39}}  \put(-38,-18){\line(1,1){40}}
\put(-38,-18){\line(1,-1){40}} \put(2,22){\line(1,0){40}}
\put(2,-58){\line(1,0){40}} \put(42,22){\line(0,-1){20}}
\put(42,-58){\line(0,1){20}}

\put(10,-70){$D_2$}

\put(-70,-90){Figure 19.}
\end{picture}
\end{center}
$$$$\\\\\\\\\\\\\\

Note that (\ref{10}) and (\ref{34}) are true for $\phi$. Since
$cr_\phi(F)\geq 3$ in $D_i$ ($1\leq i\leq 3$),  by (\ref{10}),
(\ref{34}), (\ref{Z6}), we have $cr_\phi(E)\geq
3+Z(5,n-1)+4(n-1)\geq Z(5,n)+2n$ which contradicts to
(\ref{Z3}).\\
\textbf{Case 2.} Since $cr_\phi({W},E(z_i))\leq 1$ for some $i$, we
have $cr_\phi({W},E(z_i))=1$ for some $i$. We may assume that
\begin{eqnarray}\label{Z9}
cr_\phi({W},E(z_1))=1.
\end{eqnarray}
Therefore there must be at least 4 vertices of $\phi(X\cup Y\cup
S\cup T)$ is contained in the boundary of one of the regions in
$\phi(W)$. Note that $K_{1,1,1,2}$ can be obtained by deleting an
edge in $K_{1,1,1,1,1}\cong K_5$. Therefore, the drawings of
$\phi(W)$ can be obtained by deleting an edge in a drawing of $K_5$
such that one of the regions contains at least 3 vertices. One can
check that the only possible drawings of $K_5$ such that one of the
regions contains at least 3 vertices are shown in Figure 20.

\begin{center}
\begin{picture}(0,0)(130,60)
\put(0,0){$\bullet$} \put(40,0){$\bullet$} \put(60,30){$\bullet$}
\put(-20,30){$\bullet$} \put(20,60){$\bullet$}

\put(2,2){\line(1,0){40}} \put(2,2){\line(2,1){60}}
\put(2,2){\line(1,3){20}} \put(2,2){\line(-2,3){20}}

\put(42,2){\line(2,3){20}} \put(42,2){\line(-1,3){20}}
\put(42,2){\line(-2,1){60}}

\put(22,62){\line(4,-3){40}} \put(22,62){\line(-4,-3){40}}

\put(-18,32){\line(1,0){80}}

\end{picture}
\begin{picture}(0,0)(35,60)
\put(0,0){$\bullet$} \put(60,0){$\bullet$} \put(0,60){$\bullet$}
\put(60,60){$\bullet$} \put(30,45){$\bullet$}

\put(2,2){\line(1,0){60}} \put(2,2){\line(0,1){60}}
\put(2,2){\line(1,1){60}}

\put(62,2){\line(0,1){60}} \put(62,2){\line(-1,1){60}}

\put(62,63){\line(-1,0){60}}

\put(32,47){\line(2,1){30}} \put(32,47){\line(-2,1){30}}
\put(33,47){\line(2,-3){30}} \put(32,47){\line(-2,-3){30}}
\end{picture}
\begin{picture}(0,0)(-65,60)
\put(0,0){$\bullet$} \put(80,0){$\bullet$} \put(40,60){$\bullet$}

\put(55,20){$\bullet$} \put(25,20){$\bullet$}

\put(2,2){\line(1,0){80}} \put(2,2){\line(2,3){40}}
\put(83,2){\line(-2,3){40}}

\put(58,22){\line(-2,5){16}} \put(58,22){\line(6,-5){23}}
\put(58,21){\line(-3,-1){57}}

\put(27,22){\line(2,5){16}} \put(27,22){\line(-6,-5){23}}
\put(27,21){\line(3,-1){57}}

\put(27,22){\line(1,0){30}}
\end{picture}
\end{center}
$$$$\\\\
\begin{center}
Figure 20.
\end{center}

To obtain the possible drawing of $\phi(W)$, we have to choose an
edge and delete it such that the resulting the drawing has a region
containing at least 4 vertices. One can check that the only possible
drawings of $\phi(W)$ are shown in Figure 18 and 21.

\begin{center}
\begin{picture}(0,0)(130,60)

\put(0,0){$\bullet$} \put(60,0){$\bullet$} \put(0,60){$\bullet$}
\put(60,60){$\bullet$} \put(30,45){$\bullet$}

 \put(2,2){\line(0,1){60}}
\put(2,2){\line(1,1){60}}

\put(62,2){\line(0,1){60}} \put(62,2){\line(-1,1){60}}

\put(62,63){\line(-1,0){60}}

\put(32,47){\line(2,1){30}} \put(32,47){\line(-2,1){30}}
\put(33,47){\line(2,-3){30}} \put(32,47){\line(-2,-3){30}}

\put(27,-14){$D_3$}
\end{picture}
\begin{picture}(0,0)(35,60)
\put(0,0){$\bullet$} \put(60,0){$\bullet$} \put(0,60){$\bullet$}
\put(60,60){$\bullet$} \put(30,45){$\bullet$}

\put(2,2){\line(1,0){60}} \put(2,2){\line(0,1){60}}

\put(62,2){\line(0,1){60}} \put(62,2){\line(-1,1){60}}

\put(62,63){\line(-1,0){60}}

\put(32,47){\line(2,1){30}} \put(32,47){\line(-2,1){30}}
\put(33,47){\line(2,-3){30}} \put(32,47){\line(-2,-3){30}}

\put(27,-14){$D_4$}
\end{picture}
\begin{picture}(0,0)(-65,60)
\put(0,0){$\bullet$} \put(60,0){$\bullet$} \put(0,60){$\bullet$}
\put(60,60){$\bullet$} \put(30,45){$\bullet$}

\put(2,2){\line(1,0){60}} \put(2,2){\line(0,1){60}}
\put(2,2){\line(1,1){60}}

\put(62,2){\line(0,1){60}} \put(62,2){\line(-1,1){60}}

\put(62,63){\line(-1,0){60}}

\put(32,47){\line(2,1){30}} \put(32,47){\line(-2,1){30}}
\put(32,47){\line(-2,-3){30}}

\put(27,-14){$D_5$}
\end{picture}
\end{center}
$$$$\\\\\\

\begin{center}
\begin{picture}(0,0)(80,60)
\put(0,0){$\bullet$} \put(60,0){$\bullet$} \put(0,60){$\bullet$}
\put(60,60){$\bullet$} \put(30,45){$\bullet$}

\put(2,2){\line(1,0){60}} \put(2,2){\line(0,1){60}}
\put(2,2){\line(1,1){60}}

\put(62,2){\line(0,1){60}} \put(62,2){\line(-1,1){60}}

\put(62,63){\line(-1,0){60}}

 \put(32,47){\line(-2,1){30}}
\put(33,47){\line(2,-3){30}} \put(32,47){\line(-2,-3){30}}

\put(27,-14){$D_6$}
\end{picture}
\begin{picture}(0,0)(-15,60)
\put(0,0){$\bullet$} \put(60,0){$\bullet$} \put(0,60){$\bullet$}
\put(60,60){$\bullet$} \put(30,45){$\bullet$}

\put(2,2){\line(1,0){60}} \put(2,2){\line(1,1){60}}

\put(2,2){\line(0,1){60}} \put(62,2){\line(-1,1){60}}

\put(62,63){\line(-1,0){60}}

\put(32,47){\line(2,1){30}} \put(32,47){\line(-2,1){30}}
\put(33,47){\line(2,-3){30}} \put(32,47){\line(-2,-3){30}}

\put(27,-14){$D_7$}
\end{picture}
\end{center}
$$$$\\\\\\
\begin{center}
Figure 21.
\end{center}

Note that it is impossible for $\phi(W)$ drawn as in Figure 18, that
is, it is impossible for $\phi(W)=D_1, D_2$. If $\phi(W)=D_1$ or
$D_2$, it is impossible for $cr_\phi(W,E(z_1))=1$ for any good
drawing. Therefore $\phi(W)$ must be drawn as in Figure 21. Denote
$F=W\cup E(z_1)$. Then by (\ref{Z9}), $\phi(F)$ must be drawn as in
Figure 22.

If $\phi(F)=F_1,F_5, F_6$ or $F_7$, then we must have
$cr_\phi(F,E(z_j))\geq 4.$ Then following the same proof in Case 1,
we can obtain a contradiction.

Therefore we can assume that $\phi(F)=F_2,F_3$ or $F_4$. Then if
$z_j$ ($2\leq j\leq n$) is located in the region which does not mark
with $\ast$, we have
\begin{eqnarray}\label{Z10}
cr_\phi(F,E(z_j))\geq 4.
\end{eqnarray}
If $z_j$ ($2\leq j\leq n$) is located in the region marked with
$\ast$, we have
\begin{eqnarray}\label{Z16}
cr_\phi(F,E(z_j))\geq 3.
\end{eqnarray}
$$$$

\begin{center}
\begin{picture}(0,0)(130,80)
\put(0,0){$\bullet$} \put(60,0){$\bullet$} \put(0,60){$\bullet$}
\put(60,60){$\bullet$} \put(30,45){$\bullet$}

 \put(2,2){\line(0,1){60}}
\put(2,2){\line(1,1){60}}

\put(62,2){\line(0,1){60}} \put(62,2){\line(-1,1){60}}

\put(62,63){\line(-1,0){60}}

\put(32,47){\line(2,1){30}} \put(32,47){\line(-2,1){30}}
\put(33,47){\line(2,-3){30}} \put(32,47){\line(-2,-3){30}}

\put(30,79){$\star$} \put(30,89){$z_1$}

\put(32,81){\line(0,-1){35}} \put(34,81){\line(3,-2){29}}
\put(31,81){\line(-3,-2){29}}

\put(32,81){\line(1,0){45}} \put(77,81){\line(0,-1){80}}
\put(77,1){\line(-1,0){15}}

\put(32,81){\line(-1,0){45}} \put(-13,81){\line(0,-1){80}}
\put(-13,1){\line(1,0){15}}

\put(27,-14){$F_1$}
\end{picture}
\begin{picture}(0,0)(35,80)
\put(0,0){$\bullet$} \put(60,0){$\bullet$} \put(0,60){$\bullet$}
\put(60,60){$\bullet$} \put(30,45){$\bullet$}

\put(2,2){\line(1,0){60}} \put(2,2){\line(0,1){60}}

\put(62,2){\line(0,1){60}} \put(62,2){\line(-1,1){60}}

\put(62,63){\line(-1,0){60}}

\put(32,47){\line(2,1){30}} \put(32,47){\line(-2,1){30}}
\put(33,47){\line(2,-3){30}} \put(32,47){\line(-2,-3){30}}

\put(30,79){$\star$} \put(30,89){$z_1$}

\put(32,81){\line(0,-1){35}} \put(34,81){\line(3,-2){29}}
\put(31,81){\line(-3,-2){29}}

\put(32,81){\line(1,0){45}} \put(77,81){\line(0,-1){80}}
\put(77,1){\line(-1,0){15}}

\put(32,81){\line(-1,0){45}} \put(-13,81){\line(0,-1){80}}
\put(-13,1){\line(1,0){15}}

\put(27,-14){$F_2$}

\put(30,35){$\ast$}

\end{picture}
\begin{picture}(0,0)(-65,80)
\put(0,0){$\bullet$} \put(60,0){$\bullet$} \put(0,60){$\bullet$}
\put(60,60){$\bullet$} \put(30,45){$\bullet$}

\put(2,2){\line(1,0){60}} \put(2,2){\line(0,1){60}}

\put(62,2){\line(0,1){60}} \put(62,2){\line(-1,1){60}}

\put(62,63){\line(-1,0){60}}

\put(32,47){\line(2,1){30}} \put(32,47){\line(-2,1){30}}
\put(33,47){\line(2,-3){30}} \put(32,47){\line(-2,-3){30}}

\put(30,79){$\star$} \put(30,89){$z_1$}

\put(34,81){\line(4,-1){40}} \put(74,71){\line(0,-1){23}}
\put(74,48){\line(-1,0){40}}

\put(34,81){\line(3,-2){29}} \put(31,81){\line(-3,-2){29}}

\put(32,81){\line(1,0){45}} \put(77,81){\line(0,-1){80}}
\put(77,1){\line(-1,0){15}}

\put(32,81){\line(-1,0){45}} \put(-13,81){\line(0,-1){80}}
\put(-13,1){\line(1,0){15}}

\put(27,-14){$F_3$}

\put(22,43){$\ast$}
\end{picture}
\end{center}
$$$$\\\\\\\\\\

\begin{center}
\begin{picture}(0,0)(130,80)
\put(0,0){$\bullet$} \put(60,0){$\bullet$} \put(0,60){$\bullet$}
\put(60,60){$\bullet$} \put(30,45){$\bullet$}

\put(2,2){\line(1,0){60}} \put(2,2){\line(0,1){60}}
\put(2,2){\line(1,1){60}}

\put(62,2){\line(0,1){60}} \put(62,2){\line(-1,1){60}}

\put(62,63){\line(-1,0){60}}

\put(32,47){\line(2,1){30}} \put(32,47){\line(-2,1){30}}
\put(32,47){\line(-2,-3){30}}

\put(30,79){$\star$} \put(30,89){$z_1$}

\put(32,81){\line(0,-1){35}} \put(34,81){\line(3,-2){29}}
\put(31,81){\line(-3,-2){29}}

\put(32,81){\line(1,0){45}} \put(77,81){\line(0,-1){80}}
\put(77,1){\line(-1,0){15}}

\put(32,81){\line(-1,0){45}} \put(-13,81){\line(0,-1){80}}
\put(-13,1){\line(1,0){15}}

\put(27,-14){$F_4$}

\put(30,35){$\ast$}
\end{picture}
\begin{picture}(0,0)(35,80)
\put(30,79){$\star$} \put(30,89){$z_1$}

\put(32,81){\line(0,-1){35}} \put(34,81){\line(3,-2){29}}
\put(31,81){\line(-3,-2){29}}

\put(32,81){\line(1,0){45}} \put(77,81){\line(0,-1){80}}
\put(77,1){\line(-1,0){15}}

\put(32,81){\line(-1,0){45}} \put(-13,81){\line(0,-1){80}}
\put(-13,1){\line(1,0){15}}

\put(0,0){$\bullet$} \put(60,0){$\bullet$} \put(0,60){$\bullet$}
\put(60,60){$\bullet$} \put(30,45){$\bullet$}

\put(2,2){\line(1,0){60}} \put(2,2){\line(0,1){60}}
\put(2,2){\line(1,1){60}}

\put(62,2){\line(0,1){60}} \put(62,2){\line(-1,1){60}}

\put(62,63){\line(-1,0){60}}

 \put(32,47){\line(-2,1){30}}
\put(33,47){\line(2,-3){30}} \put(32,47){\line(-2,-3){30}}

\put(27,-14){$F_5$}
\end{picture}
\begin{picture}(0,0)(-65,80)
\put(30,79){$\star$} \put(30,89){$z_1$}

\put(32,81){\line(0,-1){35}} \put(34,81){\line(3,-2){29}}
\put(31,81){\line(-3,-2){29}}

\put(32,81){\line(1,0){45}} \put(77,81){\line(0,-1){80}}
\put(77,1){\line(-1,0){15}}

\put(32,81){\line(-1,0){45}} \put(-13,81){\line(0,-1){80}}
\put(-13,1){\line(1,0){15}}

\put(0,0){$\bullet$} \put(60,0){$\bullet$} \put(0,60){$\bullet$}
\put(60,60){$\bullet$} \put(30,45){$\bullet$}

\put(2,2){\line(1,0){60}} \put(2,2){\line(1,1){60}}

\put(2,2){\line(0,1){60}} \put(62,2){\line(-1,1){60}}

\put(62,63){\line(-1,0){60}}

\put(32,47){\line(2,1){30}} \put(32,47){\line(-2,1){30}}
\put(33,47){\line(2,-3){30}} \put(32,47){\line(-2,-3){30}}

\put(27,-14){$F_6$}
\end{picture}
\end{center}
$$$$\\\\\\\\\\

\begin{center}
\begin{picture}(0,0)(35,80)
\put(30,79){$\star$} \put(30,89){$z_1$}

\put(34,81){\line(4,-1){40}} \put(74,71){\line(0,-1){23}}
\put(74,48){\line(-1,0){40}}

\put(34,81){\line(3,-2){29}} \put(31,81){\line(-3,-2){29}}

\put(32,81){\line(1,0){45}} \put(77,81){\line(0,-1){80}}
\put(77,1){\line(-1,0){15}}

\put(32,81){\line(-1,0){45}} \put(-13,81){\line(0,-1){80}}
\put(-13,1){\line(1,0){15}}

\put(0,0){$\bullet$} \put(60,0){$\bullet$} \put(0,60){$\bullet$}
\put(60,60){$\bullet$} \put(30,45){$\bullet$}

\put(2,2){\line(1,0){60}} \put(2,2){\line(1,1){60}}

\put(2,2){\line(0,1){60}} \put(62,2){\line(-1,1){60}}

\put(62,63){\line(-1,0){60}}

\put(32,47){\line(2,1){30}} \put(32,47){\line(-2,1){30}}
\put(33,47){\line(2,-3){30}} \put(32,47){\line(-2,-3){30}}

\put(27,-14){$F_7$}
\end{picture}
\end{center}
$$$$\\\\\\\\\\
\begin{center}
Figure 22.
\end{center}

We claim that it is impossible for $cr_\phi(F,E(z_j))=3$. Suppose
not, we may assume that $z_2$ is located in the region marked with
$\ast$ such that $cr_\phi(F,E(z_2))=3$. Then we must have
\begin{eqnarray}\label{Z11}
cr_\phi(W,E(z_2))=3\mbox{ and }cr_\phi(E(z_1),E(z_2))=0.
\end{eqnarray}
For $3\leq k\leq n$, $\langle E(z_1)\cup E(z_2)\cup
E(z_k)\rangle\cong K_{3,5}$. Hence, from (\ref{21}), (\ref{3}),
(\ref{4}), we have
\begin{eqnarray}\label{Z12}
cr_\phi(E(z_1)\cup E(z_2),E(z_k))+cr_\phi(E(z_1),E(z_2))\geq 4.
\end{eqnarray}
Then by (\ref{Z11}) and (\ref{Z12}), we have
\begin{eqnarray}\label{Z15}
cr_\phi(E(z_1)\cup E(z_2),E(z_k))\geq 4&\mbox{for}&3\leq k\leq n.
\end{eqnarray}

Let $E'=E-(E(z_1)\cup E(z_2))$. Then by (\ref{3}), (\ref{4}),
\begin{eqnarray}\label{Z14}
\begin{array}{rcl}
cr_\phi(E)&=&cr_\phi(E')+cr_\phi(E(z_1)\cup
E(z_2))+cr_\phi(W,E(z_1))\\
&&+cr_\phi(W,E(z_2))+\displaystyle\sum_{k=3}^n cr_\phi(E(z_1)\cup
E(z_2),E(z_k)).
\end{array}
\end{eqnarray}

Then by the fact that $\langle E'\rangle\cong K_{1,1,1,2,n-2}$ and
induction assumption, (\ref{Z9}), (\ref{Z11}), (\ref{Z15}),
(\ref{Z14}), we have $cr_\phi(E)\geq Z(5,n-2)+2(n-2)+1+3+4(n-2)\geq
Z(5,n)+2n$ which contradicts (\ref{Z3}). This proves our claim.

By the claim and (\ref{Z16}), we know that if $z_j$ ($2\leq j\leq
n$) is located in the region marked with $\ast$, we have
\begin{eqnarray}\label{Z17}
cr_\phi(F,E(z_j))\geq 4.
\end{eqnarray}
Then by (\ref{Z10}) and (\ref{Z17}), we know that no matter which
region $z_j$ is located, we have $cr_\phi(F,E(z_j))\geq 4.$ Then
following the same proof in Case 1, we can obtain a contradiction.
\end{proof}

\section{Crossing number of $K_{1,4,n}$}

In this section, we will prove
\begin{thm}
The crossing number of the complete 3-partite graph $K_{1,4,n}$ is
given by  $$cr(K_{1,4,n})=Z(5,n)+2\lfloor\frac{n}{2}\rfloor.$$
\end{thm}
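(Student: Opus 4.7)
The plan is to prove the equality in two parts. For the upper bound I would exhibit an explicit good drawing of $K_{1,4,n}$ realizing $Z(5,n)+2\lfloor n/2\rfloor$ crossings. Starting from the standard Zarankiewicz drawing of the bipartite graph $K_{5,n}$, with the five vertices of $X\cup Y$ on the $y$-axis and the $n$ vertices of $Z$ on the $x$-axis, I would place the star vertex $x$ in an extremal position among the five. The four star edges $xy_1,\ldots,xy_4$ are then routed as short detours along the $y$-axis that collectively create exactly $2\lfloor n/2\rfloor$ extra crossings with the $Z$-edges, extending Asano's construction for $K_{1,3,n}$.

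For the lower bound, let $\phi$ be any good immersion of $K_{1,4,n}$ and set $W=E_{XY}$. Because the four star edges are pairwise adjacent, $cr_\phi(W)=0$, and (\ref{3}),(\ref{4}) give
$$cr_\phi(E)=cr_\phi\Bigl(\bigcup_{i=1}^n E(z_i)\Bigr)+\sum_{i=1}^n cr_\phi(W,E(z_i)).$$
Write $\alpha_i=cr_\phi(W,E(z_i))$, $b_{ij}=cr_\phi(E(z_i),E(z_j))$, $A=\sum_i\alpha_i$ and $B=\sum_{i<j}b_{ij}$, so $cr_\phi(E)=A+B$. Since $\langle\bigcup_i E(z_i)\rangle\cong K_{5,n}$, Kleitman's theorem yields $B\ge Z(5,n)$. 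Moreover, for every pair $i<j$ the subgraph $K_{1,4,2}^{ij}$ on $X\cup Y\cup\{z_i,z_j\}$ contains a copy of $K_{4,3}$ (with bipartition $Y\,|\,\{x,z_i,z_j\}$), hence $\alpha_i+\alpha_j+b_{ij}\ge cr(K_{4,3})=2$; summing over pairs gives $(n-1)A+B\ge n(n-1)$.

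I would close the argument by induction on $n$, with base cases $n\le 2$ verified directly: the formula predicts $cr(K_{1,4,1})=1$ and $cr(K_{1,4,2})=2$, both matching the $K_{3,3}$ and $K_{4,3}$ subgraph bounds. For the inductive step, removing $z_i$ produces a drawing of $K_{1,4,n-1}$, so by the inductive hypothesis $cr_\phi(E\setminus E(z_i))\ge Z(5,n-1)+2\lfloor(n-1)/2\rfloor$. Summing these $n$ inequalities and using the identity $\sum_i(\alpha_i+\sum_{j\ne i}b_{ij})=A+2B$, combined with the Kleitman bound and the pairwise $K_{4,3}$ estimate above, an appropriate linear combination should give $A+B\ge Z(5,n)+2\lfloor n/2\rfloor$.

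The main obstacle I anticipate is the tight combination of the three inequalities: a straightforward LP combination of $B\ge Z(5,n)$, $(n-1)A+B\ge n(n-1)$, and the inductive removal bound falls short by a constant depending on the parity of $n$. I expect to recover this slack either by strengthening the pair estimate in the critical configuration (using the catalogue of small drawings of $K_{2,3}$ and $K_{1,2,2}$ from Lemmas~\ref{24} and \ref{35}), or by a two-step induction that removes a pair $\{z_i,z_j\}$ for which $d_i+d_j-b_{ij}$ can be bounded via the exact value $cr(K_{1,4,2})=2$; a case split on the parity of $n$ should then complete the ``basic counting argument'' referred to in the introduction.
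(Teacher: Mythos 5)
There is a genuine gap, and it is exactly the one you flag yourself at the end: the linear combination of $B\ge Z(5,n)$, the pairwise/suppression counting bound, and the inductive hypothesis stalls one crossing short of the target. The paper's own counting argument has the same shortfall --- it yields only $cr_\phi(E)\ge Z(5,n)+2\lfloor n/2\rfloor-1$, whence by integrality the hypothetical counterexample must satisfy $cr_\phi(E)=Z(5,n)+2\lfloor n/2\rfloor-1$ with $B=cr_\phi(\bigcup_i E(z_i))$ and $A=\sum_i cr_\phi(E_{XY},E(z_i))$ both pinned down. Neither of your two proposed repairs is what closes it. Strengthening the pair estimate via Lemmas \ref{24} and \ref{35} cannot work here the way it does for $K_{1,2,2,n}$ and $K_{1,1,1,2,n}$: for those graphs $\langle W\rangle$ is a nonplanar-ish subgraph whose drawing cuts the plane into regions with few partition vertices on each boundary, whereas here $\langle E_{XY}\rangle$ is a $4$-star, a tree, which separates nothing, so no region forces extra crossings. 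A two-step induction on pairs runs into the same parity-dependent slack.

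The missing idea is a drawing surgery that converts the extremal configuration into a too-good drawing of $K_{5,n+1}$. Fix the rotation of the four star edges at $x_1$; they cut a neighbourhood of $\phi(x_1)$ into four sectors, and each $z_i$ is assigned to the sector $A_j$ through which $\phi(x_1z_i)$ leaves $x_1$, so $|A_1|+|A_2|+|A_3|+|A_4|=n$ and by pigeonhole one can label so that $|A_1|\le|A_3|-\lceil n/2\rceil+\lfloor n/2\rfloor$. Insert a new vertex $z_{n+1}$ next to $x_1$ in the sector between $x_1y_1$ and $x_1y_2$, route each $z_{n+1}y_j$ hugging $x_1y_j$ and $z_{n+1}x_1$ without crossings, then \emph{delete} the four edges $x_1y_j$. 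The result is a good drawing of $K_{5,n+1}$ (parts $\{x_1,y_1,\dots,y_4\}$ and $\{z_1,\dots,z_{n+1}\}$) with exactly $B+A+2|A_1|+|A_2|+|A_4|$ crossings, and the pigeonhole choice together with $A+B=Z(5,n)+2\lfloor n/2\rfloor-1$ makes this strictly less than $Z(5,n+1)$, contradicting Kleitman's theorem. That augmentation step is the entire content of the proof beyond the counting you already have; without it (or a substitute of comparable strength) the argument does not close. Two smaller corrections: the base value is $cr(K_{1,4,1})=Z(5,1)+2\lfloor 1/2\rfloor=0$, not $1$ ($K_{1,4,1}$ is planar and contains no $K_{3,3}$), and your pairwise inequality $\alpha_i+\alpha_j+b_{ij}\ge 2$, while true, is strictly weaker than the suppression bound $(n-1)A+(n-2)B\ge n\,cr(K_{1,4,n-1})$ actually needed to get within one of the target.
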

\begin{proof}
Let $(X,Y,Z)$ be partition of $K_{1,4,n}$ such that $X=\{ x_1\}$,
$Y=\displaystyle\bigcup_{i=1}^4\{y_i\}$ and
$Z=\displaystyle\bigcup_{i=1}^n\{z_i\}$. To show that
$cr(K_{1,4,n})\leq Z(5,n)+2\displaystyle\lfloor\frac{n}{2}\rfloor,$
see Figure 23 for $n=4$, and it can be easily generalized to $n$.

\begin{center}
\begin{picture}(60,20)(-25,35)
\put(0,0){$\bullet$} \put(1,40){$\bullet$} \put(-11,-40){$\bullet$}
\put(0,-81){$\bullet$} \put(0,-121){$\bullet$}

\put(-9,8){$x_1$} \put(-13,41){$y_1$} \put(-17,-30){$y_2$}
\put(-12,-81){$y_3$} \put(10,-121){$y_4$}

\put(2,2){\line(0,-1){80}} \put(2,2){\line(-1,-4){10}}
 \put(3,2){\line(0,1){40}}

\put(20,-20){$\bullet$} \put(60,-22){$\bullet$}
\put(-40,-20){$\bullet$} \put(-80,-21){$\bullet$}

\put(8,-18){$z_1$} \put(-49,-20){$z_3$} \put(67,-20){$z_2$}
\put(-89,-21){$z_4$}

\put(22,-18){\line(-1,1){20}} \put(24,-18){\line(-1,3){20}}
\put(24,-17){\line(-3,-2){32}} \put(24,-18){\line(-1,-3){20}}

\put(62,-18){\line(-3,1){60}} \put(64,-18){\line(-1,1){60}}
 \put(64,-20){\line(-4,-1){71}} \put(64,-18){\line(-1,-1){60}}

\put(-38,-18){\line(2,1){40}} \put(-38,-18){\line(2,3){40}}
\put(-38,-18){\line(3,-2){28}} \put(-38,-18){\line(2,-3){40}}

\put(-78,-18){\line(4,1){80}} \put(-78,-17){\line(4,3){80}}
\put(-78,-20){\line(4,-1){70}} \put(-78,-20){\line(4,-3){80}}

\put(4,-119){\line(1,5){20}} \put(4,-119){\line(3,5){60}}
\put(2,-119){\line(-2,5){40}} \put(2,-119){\line(-2,5){40}}
\put(1,-119){\line(-4,5){80}}

\put(126,-17){\line(-6,1){123}} \put(126,-17){\line(-6,-5){123}}

 \put(-15,-148){Figure 23.}
\end{picture}
\end{center}
$$$$\\\\\\\\\\\\\\\\\\\\\\\\\\

Therefore it suffices to show that
\begin{eqnarray}\label{A2}
cr(K_{1,4,n})\geq Z(5,n)+2\lfloor\frac{n}{2}\rfloor.
\end{eqnarray}

We will prove (\ref{A2}) by induction on $n$. It is clear that
(\ref{A2}) is true for $n=1$. For $n=2$, since $K_{1,4,2}$ contains
$K_{3,4}$, $cr(K_{1,4,2})\geq cr(K_{3,4})=2$ by (\ref{21}).
Therefore, (\ref{A2}) is true for $n=2$. Now we can assume that
$n\geq 3$. Suppose (\ref{A2}) is true for all value less than $n$
and is not true for $n$. Thus there exists a good immersion $\phi$
of $K_{1,4,n}$ such that
\begin{eqnarray}\label{A6}
cr_{\phi}(E)<Z(5,n)+2\lfloor\frac{n}{2}\rfloor.
\end{eqnarray}

Note that the drawing of $K_{1,4,n}$, $E$, includes $n$ drawings
of $K_{1,4,n-1}$, each obtained by suppressing one vertex $z_i$ in
$Z$. Each crossing between $u_iz_j$ and $u_kz_l$ where $u_i,
u_k\in X\cup Y$ and $z_j, z_l\in Z$ will occur in $n-2$ of these
drawings, namely, those in which neither $z_i$ nor $z_l$ is
suppressed. Also, each crossing between $u_iu_j$ and $u_kz_l$
where $u_i, u_j, u_k\in X\cup Y$ and $z_l\in Z$ will occur in
$n-1$ of these drawings, namely, those in which $z_l$ is not
suppressed. Therefore, by the fact that $cr_\phi(E_{XY})=0$, it
follows that
\begin{eqnarray}\label{A37}
(n-1)cr_\phi(E_{XY},\bigcup_{i=1}^nE(z_i))+(n-2)cr_\phi(\bigcup_{i=1}^nE(z_i))\geq
n\mbox{ }cr(K_{1,4,n-1})
\end{eqnarray}
Then by induction assumption and (\ref{A37}), we have
\begin{eqnarray}\label{A38}
cr_\phi(E)\geq
Z(5,n-1)+2\lfloor\frac{n-1}{2}\rfloor-\sum_{i=1}^n\frac{cr_\phi(E_{XY},E(z_i))}{n-2}.
\end{eqnarray}
By (\ref{3}) and (\ref{4}), we have
\begin{eqnarray}\label{A5}
cr_\phi(E)=cr_\phi({E_{XY}})+cr_\phi(\bigcup_{i=1}^n
E(z_i))+\sum_{i=1}^n cr_\phi({E_{XY}},E(z_i)).
\end{eqnarray}
Since $\langle\displaystyle\bigcup_{i=1}^n E(z_i)\rangle\cong
K_{5,n}$, by (\ref{21}), we have
\begin{eqnarray}\label{A7}
cr_\phi(\bigcup_{i=1}^n E(z_i))\geq Z(5,n).
\end{eqnarray}
Therefore, by (\ref{A6}), (\ref{A5}) and (\ref{A7}), we have
\begin{eqnarray}\label{A39}
\sum_{i=1}^ncr_\phi(E_{XY},E(z_i))\leq
2\lfloor\frac{n}{2}\rfloor-1.
\end{eqnarray}
Combining (\ref{A38}), (\ref{A39}) and the fact that $cr_\phi(E)$
is an integer, we have
\begin{eqnarray}\label{A40}
cr_\phi(E)\geq Z(5,n)+2\lfloor\frac{n}{2}\rfloor-1.
\end{eqnarray}
By (\ref{A6}), (\ref{A40}) and the fact that $cr_\phi(E)$ is an
integer, we have
\begin{eqnarray}\label{A41}
cr_\phi(E)=cr_\phi(\bigcup_{i=1}^n
E(z_i))+\sum_{i=1}^ncr_\phi(E_{XY},E(z_i))=Z(5,n)+2\lfloor\frac{n}{2}\rfloor-1.
\end{eqnarray}

We may assume that $\phi(\langle E_{XY}\rangle)$ is drawn as in
Figure 24.

\begin{center}
\begin{picture}(0,0)(120,5)
\put(0,-20){$\circ$} \put(9,-20){$x_1$}

\put(4,-16){\line(1,1){19}} \put(4,-18){\line(1,-1){19}}
\put(1,-16){\line(-1,1){19}} \put(1,-18){\line(-1,-1){19}}

\put(20,0){$\bullet$} \put(28,0){$y_2$}

\put(-20,0){$\bullet$} \put(-30,0){$y_1$}

\put(20,-40){$\bullet$} \put(28,-40){$y_3$}

\put(-20,-40){$\bullet$} \put(-30,-40){$y_4$}

\put(-20,-70){Figure 24.}
\end{picture}
\begin{picture}(0,0)(0,5)
\put(0,-20){$\circ$} \put(9,-20){$x_1$}

\put(4,-16){\line(1,1){19}} \put(4,-18){\line(1,-1){19}}
\put(1,-16){\line(-1,1){19}} \put(1,-18){\line(-1,-1){19}}

\put(20,0){$\bullet$} \put(28,0){$y_2$}

\put(-20,0){$\bullet$} \put(-30,0){$y_1$}

\put(20,-40){$\bullet$} \put(28,-40){$y_3$}

\put(-20,-40){$\bullet$} \put(-30,-40){$y_4$}

\put(-40,-20){$\star$} \put(-50,-20){$z_i$}
\put(-38,-17){\line(1,0){39}}

\put(-20,-67){Figure 25.}
\end{picture}
\begin{picture}(0,0)(-120,5)
\put(0,-20){$\circ$} \put(9,-20){$x_1$}

\put(4,-16){\line(1,1){19}} \put(4,-18){\line(1,-1){19}}
\put(1,-16){\line(-1,1){19}} \put(1,-18){\line(-1,-1){19}}

\put(20,0){$\bullet$} \put(28,0){$y_2$}

\put(-20,0){$\bullet$} \put(-30,0){$y_1$}

\put(20,-40){$\bullet$} \put(28,-40){$y_3$}

\put(-20,-40){$\bullet$} \put(-30,-40){$y_4$}

\put(0,-11){$\star$} \put(-6,-3){$z_{n+1}$}

\put(-20,-65){Figure 26.}
\end{picture}
\end{center}
$$$$\\\\\\\\

For $1\leq j\leq 4$, let $A_j$ be the set of $z_i$, $1\leq i\leq n$,
such that $x_1z_i$ lies between the edges $y_j$ and $y_{j+1}$ (mod 4
for $j+1$). See Figure 25 for $z_i\in A_4$.

For $n$ is even, we may assume that $|A_1|\leq |A_3|$. For $n$ is
odd, we have $|A_1|\neq |A_3|$ or $|A_2|\neq |A_4|$ (Otherwise,
$n$ is even). Then we may assume $|A_1|< |A_3|$. Therefore, we
have
\begin{eqnarray}\label{A43}
|A_1|\leq |A_3|-\lceil\frac{n}{2}\rceil+\lfloor\frac{n}{2}\rfloor
\end{eqnarray}

We are going to obtain a drawing of $K_{5,n+1}$ from $\phi(E)$ and
then obtain a contradiction. To obtain a drawing of $K_{5,n+1}$ from
$\phi(E)$, we draw a new vertex, denoted it by $z_{n+1}$, near the
vertex $x_1$ and lying in the region between the edges $x_1y_1$ and
$x_1y_2$, as shown in Figure 26.\\

\begin{center}
\begin{picture}(0,0)(140,0)
\put(0,-20){$\circ$} \put(0,-27){$x_1$}

\put(20,0){$\bullet$} \put(28,0){$y_2$}

\put(-20,0){$\bullet$} \put(-30,0){$y_1$}

\put(20,-40){$\bullet$} \put(28,-40){$y_3$}

\put(-20,-40){$\bullet$} \put(-30,-40){$y_4$}

\put(0,-11){$\star$} \put(-6,-0){$z_{n+1}$}

\put(2,-9){\line(2,1){20}} \put(2,-9){\line(-2,1){20}}
\put(2,-9){\line(2,-3){19}} \put(2,-9){\line(-2,-3){19}}

\put(2,-9){\line(0,-1){7}}

\put(-20,-70){Figure 27.}
\end{picture}
\begin{picture}(0,0)(53,0)
\put(0,-20){$\circ$} \put(0,-27){$x_1$}

\put(20,0){$\bullet$} \put(28,0){$y_2$}

\put(-20,0){$\bullet$} \put(-30,0){$y_1$}

\put(20,-40){$\bullet$} \put(28,-40){$y_3$}

\put(-20,-40){$\bullet$} \put(-30,-40){$y_4$}

\put(0,-11){$\star$} \put(7,-13){$z_{n+1}$}

\put(2,-9){\line(2,1){20}} \put(2,-9){\line(-2,1){20}}
\put(2,-9){\line(2,-3){19}} \put(2,-9){\line(-2,-3){19}}

\put(2,-9){\line(0,-1){7}}

\put(-10,0){$\star$} \put(-3,3){$z_i$}

\put(-7,2){\line(1,-2){9}}

\put(-20,-70){Figure 28.}
\end{picture}
\begin{picture}(0,0)(-40,0)
\put(0,-20){$\circ$} \put(0,-27){$x_1$}

\put(20,0){$\bullet$} \put(28,0){$y_2$}

\put(-20,0){$\bullet$} \put(-30,0){$y_1$}

\put(20,-40){$\bullet$} \put(28,-40){$y_3$}

\put(-20,-40){$\bullet$} \put(-30,-40){$y_4$}

\put(0,-11){$\star$} \put(-6,-0){$z_{n+1}$}

\put(2,-9){\line(2,1){20}} \put(2,-9){\line(-2,1){20}}
\put(2,-9){\line(2,-3){19}} \put(2,-9){\line(-2,-3){19}}

\put(2,-9){\line(0,-1){7}}

\put(-40,-20){$\star$} \put(-50,-20){$z_i$}
\put(-38,-17){\line(1,0){39}}

\put(-20,-70){Figure 29.}
\end{picture}
\begin{picture}(0,0)(-130,0)
\put(0,-20){$\circ$} \put(1,-25){$x_1$}

\put(20,0){$\bullet$} \put(28,0){$y_2$}

\put(-20,0){$\bullet$} \put(-30,0){$y_1$}

\put(20,-40){$\bullet$} \put(28,-40){$y_3$}

\put(-20,-40){$\bullet$} \put(-30,-40){$y_4$}

\put(0,-11){$\star$} \put(-6,-0){$z_{n+1}$}

\put(2,-9){\line(2,1){20}} \put(2,-9){\line(-2,1){20}}
\put(2,-9){\line(2,-3){19}} \put(2,-9){\line(-2,-3){19}}

\put(2,-9){\line(0,-1){7}}

\put(-10,-45){$\star$} \put(-2,-47){$z_i$}
\put(-7,-42){\line(1,3){8}}

\put(-20,-70){Figure 30.}
\end{picture}
\end{center}
$$$$\\\\\\\\

For $1\leq j\leq 4$, draw the edge $z_{n+1}y_j$ next to the edge
$x_{1}y_j$ and draw the edge $z_{n+1}x_1$ without crossing any edges
in $E$. Then remove the edges $x_1y_j$ from $E$ where $1\leq j\leq
4$. See Figure 27.

Now we have a drawing of $K_{5,n+1}$ with $\{
x_1,y_1,y_2,y_3,y_4\}$ as the partition with 5 vertices and
$\displaystyle\bigcup_{i=1}^{n+1}\{ z_i\}$ as the partition with
$n+1$ vertices, denote the immersion of $K_{5,n+1}$ by $\phi'$.

Note that if $z_i\in A_1$ (see Figure 28), then
\begin{eqnarray}\label{A45}
cr_{\phi'}(E(z_i),E(z_{n+1}))=cr_\phi(E(z_i),E_{XY})+2.
\end{eqnarray}
If $z_i\in A_2\cup A_4$ (see Figure 29), then
\begin{eqnarray}\label{A46}
cr_{\phi'}(E(z_i),E(z_{n+1}))=cr_\phi(E(z_i),E_{XY})+1.
\end{eqnarray}
If $z_i\in A_3$ (see Figure 30), then
\begin{eqnarray}\label{A47}
cr_{\phi'}(E(z_i),E(z_{n+1}))=cr_\phi(E(z_i),E_{XY}).
\end{eqnarray}
Note also that
\begin{eqnarray}\label{A48}
cr_{\phi'}(\displaystyle\bigcup_{i=1}^n E(z_i))
=cr_\phi(\bigcup_{i=1}^n E(z_i))
\end{eqnarray}

Then by (\ref{3}), (\ref{4}), (\ref{A45}), (\ref{A46}),
(\ref{A47}) and (\ref{A48}), we have the crossing number of
$\phi'$ is
\begin{eqnarray*}
&&cr_\phi(\displaystyle\bigcup_{i=1}^n
E(z_i))+\sum_{i=1}^ncr_\phi(E_{XY},E(z_i))+2|A_1|+|A_2|+|A_4|\\
&\leq&
Z(5,n)+2\displaystyle\lfloor\frac{n}{2}\rfloor-1+|A_1|+|A_2|+|A_3|+|A_4|-\lceil
\frac{n}{2}\rceil+\lfloor\frac{n}{2}\rfloor\\
&=&Z(5,n)+2\displaystyle\lfloor\frac{n}{2}\rfloor-1+n-\lceil
\frac{n}{2}\rceil+\lfloor\frac{n}{2}\rfloor,
\end{eqnarray*}
where the second inequality follows from (\ref{A41}), (\ref{A43}),
and the last equality follows from the fact that
$|A_1|+|A_2|+|A_3|+|A_4|=n$. However, since
$Z(5,n)+2\displaystyle\lfloor\frac{n}{2}\rfloor-1+n-\lceil
\frac{n}{2}\rceil+\lfloor\frac{n}{2}\rfloor<Z(5,n+1)$, we obtained
a drawing of $K_{5,n+1}$ with crossing number less than
$Z(5,n+1)$, which is contradicted to (\ref{21}).
\end{proof}

\section{Crossing number of $K_{1,3,n}$}
In this section, we use the basic counting argument to determine the
crossing number of $K_{1,3,n}$ which is obtained by Asano in
cite{Asano}.
\begin{thm}
The crossing number of the complete 3-partite graph $K_{1,3,n}$ is
given by  $$cr(K_{1,3,n})=Z(4,n)+\lfloor\frac{n}{2}\rfloor.$$
\end{thm}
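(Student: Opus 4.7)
My plan is to mimic the proof of Theorem 5.1 on $K_{1,4,n}$, adapting the counting argument and the ``$z_{n+1}$-insertion'' trick to the graph $K_{1,3,n}$. For the upper bound $cr(K_{1,3,n})\leq Z(4,n)+\lfloor n/2\rfloor$, I would exhibit an explicit drawing realizing this value by arranging $x_1,y_1,y_2,y_3$ on a vertical axis and distributing the vertices $z_i$ symmetrically on both sides in the standard Zarankiewicz pattern (in the spirit of Figures~4 and~23). The induced $K_{4,n}$ subdrawing contributes exactly $Z(4,n)$ crossings, and a direct count shows that the additional edges $x_1z_i$ force $\lfloor n/2\rfloor$ further crossings.

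For the lower bound, I would induct on $n$. The base case $n=1$ is immediate since $K_{1,3,1}\supset K_{3,3}$, and $n=2$ is handled by a short case analysis. For $n\geq 3$, suppose for contradiction that a good drawing $\phi$ achieves $cr_\phi(E)<Z(4,n)+\lfloor n/2\rfloor$. The central step is the double-counting identity used in Theorem~5.1: each of the $n$ sub-drawings of $K_{1,3,n-1}$ obtained by suppressing one $z_i$ retains a given crossing with multiplicity $n-1$ if one of its edges lies in $E_{XY}$ and the other in some $E(z_j)$, and with multiplicity $n-2$ if both lie in $\bigcup_i E(z_i)$. This yields
\[(n-1)\,cr_\phi(E_{XY},\textstyle\bigcup_i E(z_i))+(n-2)\,cr_\phi(\textstyle\bigcup_i E(z_i))\geq n\,cr(K_{1,3,n-1}).\]
Combining this with the induction hypothesis, with Kleitman's bound $cr_\phi(\bigcup_i E(z_i))\geq cr(K_{4,n})=Z(4,n)$ (since $\langle\bigcup_i E(z_i)\rangle\cong K_{4,n}$), and with the working assumption $cr_\phi(E)<Z(4,n)+\lfloor n/2\rfloor$ produces a tight upper bound on $\sum_{i=1}^n cr_\phi(E_{XY},E(z_i))$.

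To reach a contradiction, I would convert $\phi$ into a good drawing $\phi'$ of $K_{4,n+1}$. The three edges $x_1y_1,x_1y_2,x_1y_3$ divide a small disk around $\phi(x_1)$ into three angular sectors; let $A_j$ be the set of $z_i$ whose edge $x_1z_i$ enters the $j$th sector, so $|A_1|+|A_2|+|A_3|=n$ and some index $j^*$ satisfies $|A_{j^*}|\leq\lfloor n/3\rfloor$. Place a new vertex $z_{n+1}$ inside the $j^*$th sector, close to $\phi(x_1)$; draw $z_{n+1}x_1$ as a short arc, and draw each $z_{n+1}y_k$ hugging $x_1y_k$. Deleting $E_{XY}$ produces $\phi'$. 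A sector-by-sector accounting of the crossings of $E(z_{n+1})$ with each $E(z_i)$ (analogous to (A45)--(A48) in the previous proof), together with the minimality of $|A_{j^*}|$ and the bound on $\sum_i cr_\phi(E_{XY},E(z_i))$, should give $cr_{\phi'}(E)<Z(4,n+1)$, contradicting Kleitman's formula.

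The main obstacle is the final sector-excess calculation: with only three sectors (rather than four), the numbers of extra crossings produced by $E(z_{n+1})$ on each class $A_j$ must be tabulated precisely, and the minimality of $|A_{j^*}|$ must be exploited just sharply enough to bridge the $\lfloor n/2\rfloor$ term on the $K_{1,3,n}$ side with $Z(4,n+1)-Z(4,n)$ on the $K_{4,n+1}$ side; a parity split on $n$ is almost certainly needed, and one should also argue, without loss of generality, that $cr_\phi(E_{XY})=0$, since $E_{XY}$ is a star.
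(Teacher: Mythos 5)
Your setup coincides with the paper's right up to the counting step: the upper-bound drawing, the induction, the inequality $(n-1)cr_\phi(E_{XY},\bigcup_i E(z_i))+(n-2)cr_\phi(\bigcup_i E(z_i))\geq n\,cr(K_{1,3,n-1})$, and the bound $\sum_{i=1}^n cr_\phi(E_{XY},E(z_i))\leq\lfloor\frac{n}{2}\rfloor-1$ are all exactly (\ref{C7}) and (\ref{C6}). But the step you yourself flag as ``the main obstacle'' is a genuine gap, and the three-sector insertion cannot be repaired. Track the budget: under the contradiction hypothesis $cr_\phi(E)\leq Z(4,n)+\lfloor\frac{n}{2}\rfloor-1$, the hugging construction gives $cr_{\phi'}(E)=cr_\phi(E)+\sum_i e_i$, where $e_i$ is the extra cost incurred by $z_i$ near $x_1$; since $Z(4,n+1)-Z(4,n)=2\lfloor\frac{n}{2}\rfloor$, you need $\sum_i e_i\leq\lfloor\frac{n}{2}\rfloor$. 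Now do the sector accounting analogous to (\ref{A45})--(\ref{A47}): with $z_{n+1}$ placed in a sector, two of the edges $z_{n+1}y_k$ can leave along the bounding rays, but the third must sweep around $x_1$ past every edge $x_1z_i$ attached in at least one entire sector; optimizing over the placement and the direction of the wrap, the best achievable total is $\sum_i e_i=n-\max_j|A_j|$. There is no sector ``opposite'' to $z_{n+1}$ that costs zero, which is the feature that makes the four-sector version work. When the three classes are balanced, $|A_j|\approx n/3$, the minimum cost is about $2n/3$, which exceeds $\lfloor\frac{n}{2}\rfloor$ for all $n\geq 5$; pigeonholing on the \emph{smallest} sector is useless because the cost is governed by the largest, and no parity split changes this.

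The insertion is also unnecessary, and this is exactly where the paper's proof departs from yours. Feeding the induction hypothesis into (\ref{C7}) gives $cr_\phi(E)\geq Z(4,n)+\lfloor\frac{n}{2}\rfloor-\frac{1}{n-2}\sum_{i=1}^n cr_\phi(E_{XY},E(z_i))$, which is (\ref{C8}); since $\sum_{i=1}^n cr_\phi(E_{XY},E(z_i))\leq\lfloor\frac{n}{2}\rfloor-1<n-2$ for $n\geq 3$, the subtracted term is strictly less than $1$, and integrality of $cr_\phi(E)$ immediately yields $cr_\phi(E)\geq Z(4,n)+\lfloor\frac{n}{2}\rfloor$, the desired contradiction. (This is precisely why $K_{1,3,n}$ is easier than $K_{1,4,n}$: there the corresponding sum is only bounded by $2\lfloor\frac{n}{2}\rfloor-1$, the deficiency $\frac{2\lfloor n/2\rfloor-1}{n-2}$ can exceed $1$, and the four-sector insertion, with its free opposite sector, is genuinely needed to recover the last unit.) Replace your entire $z_{n+1}$ construction with this one-line integrality observation. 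Two small additional points: $cr_\phi(E_{XY})=0$ is automatic in a good drawing (all edges of a star are adjacent), not a WLOG; and $K_{1,3,1}$ has only five vertices, so it does not contain $K_{3,3}$ --- the $n=1$ case is trivial simply because the claimed bound is $0$.
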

\begin{proof}
As proved in \cite{Asano}, one has $cr(K_{1,3,n})\leq
Z(4,n)+\displaystyle\lfloor\frac{n}{2}\rfloor.$ (Actually, to show
this, one can try to draw $K_{1,3,n}$ with such crossing numbers in
a similar way in the previous sections.) Therefore it suffices to
show that
\begin{eqnarray}\label{C1}
 cr(K_{1,3,n})\geq Z(4,n)+\displaystyle\lfloor\frac{n}{2}\rfloor.
\end{eqnarray}

We will prove (\ref{C1}) by induction on $n$. It is clear that
(\ref{C1}) is true for $n=1$. For $n=2$, note that $K_{1,3,2}$
contains $K_{3,3}$, which gives $cr(K_{1,3,2})\geq cr(K_{3,3})=1$.
Therefore (\ref{C1}) is true for $n=2$. Now we can assume that
$n\geq 3$. Suppose (\ref{C1}) is true for all value less than $n$
and is not true for $n$. Then there exists a good immersion $\phi$
of $K_{1,3,n}$ such that
\begin{eqnarray}\label{C3}
cr_{\phi}(E)<Z(4,n)+\lfloor\frac{n}{2}\rfloor.
\end{eqnarray}
Note that (\ref{5}) and (\ref{7}) are also true for $\phi$.
Therefore, (\ref{5}), (\ref{7}), (\ref{C3}) and the fact that
$cr_\phi(E_{XY})=0$ gives
\begin{eqnarray}\label{C6}
\sum_{i=1}^n cr_\phi({E_{XY}},E(z_i))\leq\lfloor\frac{n}{2}\rfloor
-1.
\end{eqnarray}

Now, we will apply the counting argument. Note that the drawing of
$K_{1,3,n}$, $E$, includes $n$ drawings of $K_{1,3,n-1}$, each
obtained by suppressing one vertex $z_i$ in $Z$. Each crossing
between $u_iz_j$ and $u_kz_l$ where $u_i, u_k\in X\cup Y$ and $z_j,
z_l\in Z$ will occur in $n-2$ of these drawings, namely, those in
which neither $z_j$ nor $z_l$ is suppressed. Also, each crossing
between $u_iu_j$ and $u_kz_l$ where $u_i, u_j, u_k\in X\cup Y$ and
$z_l\in Z$ will occur in $n-1$ of these drawings, namely, those in
which $z_l$ is not suppressed. Therefore, by the fact that
$cr_\phi(E_{XY})=0$, it follows that
\begin{eqnarray}\label{C7}
(n-1)cr_\phi(E_{XY},\bigcup_{i=1}^nE(z_i))+(n-2)cr_\phi(\bigcup_{i=1}^nE(z_i))\geq
n\mbox{ }cr(K_{1,3,n-1}).
\end{eqnarray}
Therefore, by induction assumption, (\ref{5}), (\ref{C7}) and the
face that $cr_\phi(E_{XY})=0$, we have
\begin{eqnarray}\label{C8}
cr_\phi(E)\geq
Z(4,n)+\lfloor\frac{n}{2}\rfloor-\sum_{i=1}^n\frac{cr_\phi(E_{XY},E(z_i))}{n-2}.
\end{eqnarray}
Since $n\geq 3$ and $cr_\phi(E)$ is an integer, combining (\ref{C6})
and (\ref{C8}), we have $cr_\phi(E)\geq
Z(4,n)+\displaystyle\lfloor\frac{n}{2}\rfloor,$ which contradicts
(\ref{C3}).
\end{proof}

\section{Conclusion}

We conclude this paper by stating the following conjectures:
\begin{conj}\label{conj}
\begin{equation*}
\begin{split}
cr(K_{1,1,3,n})&=Z(5,n)+\displaystyle\lfloor\frac{3n}{2}\rfloor;\\
cr(K_{2,4,n})&=Z(6,n)+2n.
\end{split}
\end{equation*}
\end{conj}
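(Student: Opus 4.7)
The plan is to attack both parts of Conjecture \ref{conj} by the same inductive template that carried Theorems \ref{1}--\ref{Z1}: establish the upper bound by an explicit drawing, then prove the matching lower bound by induction on $n$, with Kleitman's theorem supplying the bulk via the complete bipartite subgraph on $Z$ and a small set of preliminary lemmas controlling drawings of the ``base'' graph on the remaining vertices.

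For the first identity, $cr(K_{1,1,3,n})=Z(5,n)+\lfloor 3n/2\rfloor$, partition the vertex set as $(X,Y,S,Z)$ with $|X|=|Y|=1$, $|S|=3$, $|Z|=n$, and set $W=E_{XY}\cup E_{XS}\cup E_{YS}$, so that $\langle W\rangle\cong K_{1,1,3}$. I would first produce a drawing realising the bound (modelled on Figure 6, placing $X\cup Y\cup S$ on a vertical axis and the $z_i$ alternately left and right) to get $cr(K_{1,1,3,n})\le Z(5,n)+\lfloor 3n/2\rfloor$. For the lower bound, the key step is an analogue of Lemmas \ref{24} and \ref{35}: classify, up to isomorphism, the good drawings of $K_{1,1,3}$, and for each such drawing count how many vertices can sit on the boundary of a single face. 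One would derive from \cite{Harborth}'s classification of drawings of $K_{2,3}$ and $K_{3,3}$ that at most one face of any drawing of $K_{1,1,3}$ can contain four or more of the base vertices on its boundary, with the candidates for five vertices being a short explicit list. Then the induction mirrors the proof of Theorem 4.1 verbatim: if $cr_\phi(W,E(z_i))\ge 2$ for all $i$ we are done by Kleitman; otherwise split into Case 1 ($cr_\phi(W,E(z_i))=0$) and Case 2 ($=1$), enumerate the forced drawings of $F=W\cup E(z_1)$, and bound $cr_\phi(F,E(z_j))$ for $j\ge 2$ in each region, with the troublesome Case~2 again subdivided into a uniform subcase (handled arithmetically) and a subcase where $cr_\phi(E(z_1),E(z_2))=0$ (handled by passing to $K_{1,1,3,n-2}$ and the induction hypothesis, using a $K_{3,5}$-style inequality like (\ref{Z12})).

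For the second identity, $cr(K_{2,4,n})=Z(6,n)+2n$, the same blueprint applies with $\langle W\rangle=K_{2,4}$ playing the role of the base and $\bigcup_i E(z_i)\cong K_{6,n}$ supplying, via Kleitman's $cr(K_{6,n})=Z(6,n)$, the main lower bound. The trivial case $cr_\phi(W,E(z_i))\ge 2$ for every $i$ already gives $Z(6,n)+2n$ at once, so essentially only the drawings with some $cr_\phi(W,E(z_i))\le 1$ need inspection. Here I would need a structural lemma: a drawing of $K_{2,4}$ has at most one face whose boundary contains all six vertices (realised only by the outer face of the natural planar drawing), and at most one face whose boundary contains exactly five. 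Together with the $K_{5,3}$-inequality $cr_\phi(E(z_i)\cup E(z_j),E(z_k))+cr_\phi(E(z_i),E(z_j))\ge 4$, the same induction should close.

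The main obstacle in both parts is exactly what makes these cases harder than those already treated: the base graphs $K_{1,1,3}$ and $K_{2,4}$ admit substantially more non-isomorphic good drawings than $K_{1,2,2}$, and the classifications in \cite{Harborth} do not directly apply. Producing the analogues of Lemmas \ref{24} and \ref{35}, and then—most delicately—ruling out the configurations in the equivalent of Case B where $cr_\phi(E(z_1),E(z_2))=0$ contributes only $3$ rather than $4$ crossings with each additional $z_k$, will be the crux. I expect the bookkeeping in Case~2 of $K_{1,1,3,n}$ in particular to be longer than in Theorem 4.1, because the parameter $\lfloor 3n/2\rfloor$ leaves less slack per vertex and the possible drawings of $F=W\cup E(z_1)$ form a larger catalogue to enumerate.
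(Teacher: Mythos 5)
The statement you are addressing is presented in the paper only as a conjecture: the author gives no proof, remarks merely that the conjectural values are upper bounds and that ``similar methods'' verify the conjecture for many drawings, and notes that complete proofs appeared only later in the separate papers \cite{Ho1} and \cite{Ho2}. So there is no argument in this paper to compare yours against, and your proposal has to stand on its own; as written it does not, because it is a programme rather than a proof. Every step that is actually difficult is deferred. The analogues of Lemmas \ref{24} and \ref{35} for $K_{1,1,3}$ and $K_{2,4}$ --- the classification of their good drawings and the count of how many base vertices can lie on one face boundary --- are asserted as things ``one would derive,'' not derived; the enumeration of the possible drawings of $F=W\cup E(z_1)$ and the region-by-region verification of the inequalities $cr_\phi(F,E(z_j))\geq 3$ or $\geq 4$ is exactly the content that makes the proofs for $K_{1,2,2,n}$ and $K_{1,1,1,2,n}$ long, and none of it is carried out; and you yourself flag the subcase with $cr_\phi(E(z_1),E(z_2))=0$ as ``the crux'' without resolving it. ``The same induction should close'' is not a substitute for closing it, particularly since for $K_{1,1,3,n}$ the target $\lfloor 3n/2\rfloor$ allows only $1.5$ crossings per $z_i$ against $W$ on average, so the entire burden falls on showing that at most roughly half of the $z_i$ can simultaneously achieve $cr_\phi(W,E(z_i))=1$ --- precisely the delicate bookkeeping of Case 2 in the $K_{1,2,2,n}$ argument, now over a larger catalogue of base drawings.

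Two smaller points. First, your pessimism about \cite{Harborth} is partly misplaced for the first identity: $K_{1,1,3}$ is obtained from $K_{2,3}$ by adding a single edge inside the $2$-class, so Harborth's six drawings of $K_{2,3}$ yield the needed classification at least as directly as they do for $K_{1,2,2}$ in Lemma \ref{24}. Second, for $K_{2,4,n}$ the structural lemma you propose (at most one face of a drawing of $K_{2,4}$ containing all six, respectively exactly five, vertices on its boundary) is the right shape of statement, but $K_{2,4}$ is not in Harborth's list, so this requires an independent enumeration that you have not supplied; without it the analogue of Case 2 cannot even begin. The template you describe is the correct one --- it is essentially the one used in the later papers that resolved the conjecture --- but what you have written is an outline of that template, not a proof.
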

One can easily check that the conjectural values are the upper
bounds. By using similar methods in this paper, we can proved that
the conjecture is true for many drawings of them.

\textit{Note added in the proof:} Conjecture \ref{conj} was solved
in \cite{Ho1} and \cite{Ho2} respectively.

\end{document}